\documentclass{article}
\usepackage{geometry}

\usepackage{blindtext} 
\usepackage{amssymb}
\usepackage{amsmath, amsthm, amssymb}
\usepackage{multicol}
\usepackage{bbm}
\usepackage{cite}
\usepackage{dsfont}

\newcommand{\E}{\mathbb{E}}
\renewcommand{\P}{\mathbb{P}}

\newtheorem{prop}{Proposition}
\newtheorem{thm}{Theorem}
\newtheorem{lemma}{Lemma}
\newtheorem{corollary}{Corollary}

\newtheorem{definition}{Definition}

\begin{document}

\title{The Oldest Allele in a Moran Model}
\author{Douglas Rizzolo \\ University of Delaware \and Marydol Soto-Santarriaga \\ University of Delaware}

\maketitle

\begin{abstract}
We study an infinitely many neutral alleles Moran model where the allelic frequencies are recorded in increasing order of the ages of the alleles.  Using this representation, we show that in stationarity the number of individuals with the oldest allelic type evolves as a Markov process, even across times when the oldest allele goes extinct (and is immediately replaced by the formerly second oldest allele).  We also find the diffusive limit of the oldest allele chain.
\end{abstract}

\section{Introduction}
Studying the relationship between allele ages and allele frequency has a long history in population genetics, going back at least to the work of Kimura and Ohta \cite{kimura1973age}. In equilibrium, the composition of alleles ordered by age has been studied extensively, see e.g \cite{donnelly1986ages} and the references therein.  One challenge in studying the ages of alleles is that this information is not included in the state space of many common models such as the Moran model.   To address this, measure valued process incorporating ages were studied \cite{EKFlemingViotwAges}.   Despite the classical nature of problems in this area, they continue to be actively studied.  For example, \cite{malaspinas2012estimating} developed a likelihood method to jointly estimate the selection coefficient and the age of an allele from time-serial data. Meanwhile, \cite{slatkin2000allele} obtained a simple maximum-likelihood estimator of allele age based on frequency. Recently \cite{balocchi2024bayesian}, focused on developing a Bayesian Nonparametric (BNP) approach to a species sampling problem (SSPs) using an ordered Chinese Restaurant process from \cite{JamesNTR}.


Our motivation is to study the forward in time, dynamic, evolution of the allelic composition of a population ordered by age. In particular, we study an integer composition valued variant of the infinitely many neutral alleles Moran model where mutations always produce new alleles that are placed as the left-most block of the composition.  In this way, if the allelic composition is initially ordered left-to-right from youngest to oldest, it will remain in this order for all time.  We discuss the diffusive limit of this process, which in an interval-partition valued diffusion.  Our main application is to show that, in equilibrium, the frequency of the oldest allele evolves as a Markov process for all time (including across times where the oldest allele vanishes from the population and is immediately replaced by the formerly second oldest allele).  The dynamics we study are close to other classical infinitely-many neutral alleles models are the works of \cite{ewens1972sampling, watterson1976reversibility, ethier1981infinitely}.  By working with composition valued processes, we are able to study the relative ages of alleles without going to the full measure-valued process framework that includes the specific ages of alleles.  Our main result shows that, in stationarity, the number of individuals with the oldest allelic type evolves as a Markov process, even across times when the oldest allele goes extinct (and is immediately replaced by the formerly second oldest allele).  We also find the diffusive limit of the oldest allele chain.

\subsection{An Infinitely Many Neutral Alleles Moran Model}

Consider a population of constant size, $M$ individuals, each of which could be any one of an infinite sequence of possible allelic types. Moran \cite{moran1958random} introduced a model in which a birth and a death occurred simultaneously, thus preserving the constant size of the population but only changing it by two individuals at a time. The birth is carried out by a random choice of a parent from all $M$ individuals present. For some $U$ between zero and one, the parent's type is passed on to the offspring with probability $1-U$, or, with probability $U$, the offspring will be of a mutant type not previously present in the population. The death also occurs at random from among the $M$ individuals, excluding the offspring just born. Some allelic types may be represented by more individuals than others.

Watterson \cite{watterson1976reversibility} introduces to Moran's model the association of ages to the allelic types that the population presents, where a newly born mutant allelic type has age 1. For each subsequent birth-death event during which that allelic type survive, they add an extra 1 to the age of that allele. They then consider the probability distribution of the age of the allele, that is, the time that has elapsed between the introduction of the allele by mutation and the present. 
Similarly to Moran's work, we consider a model with a birth and a death occurring where mutation can happen. However, we shall consider a model that does not exclude the new offspring during the death process, and as Wattersons does, we also care about describing the ages of the allelic types in our population at a time. 

In this paper, we consider the following dynamics.  At each time an individual is chosen uniformly at random to reproduce.  They produce an individual of their own allelic type with probability $1/(n+\theta)$ or a mutation occurs and they produce an individual with a new allelic type with probability $\theta/(n+\theta)$, where $\theta>0$ is a mutation parameter.  After the new individual is born, an individual is chosen uniformly at random (including the new individual) and removed from the population.  The particular choice of these dynamics is so that the resulting chain on compositions falls within the framework of two-parameter ordered Chinese Restaurant Process up-down chains \cite{rivera2023diffusive}.  We remark that our ordered Chinese Restaurant Process comes from the regenerative structure of the compositions involved \cite{GPRegen}, unlike \cite{balocchi2024bayesian}, which uses the ordered Chinese Restaurant Process coming from the EPPF.

\begin{definition} For $n\geq 1$, a composition of $n$ is a tuple $\sigma= (\sigma_1,...,\sigma_k)$ of positive
integers that sum to $n$. The composition of $n = 0$ is the empty tuple, which we denote by $\varnothing$. If $\sigma$ is a composition of $n$ with $k$ components, we say it has size
$|\sigma|= n$ and length  $l(\sigma) = k$. We denote the set of all compositions of $n$ by $C_n$ and their union by $C$.
\end{definition}

An up-down chain on $C_n$ is a Markov chain whose steps can be factored into two parts: 1) an up-step from $C_n$ to $C_{n+1}$ according to a kernel $p^{\uparrow}$ followed by 2)
a down-step from $C_{n+1}$ to $C_n$ given by a kernel $p^\downarrow$. We denote by $T_n(\sigma,\sigma')$ the probability of transitioning from $\sigma$ to $\sigma'$ that is given by
$$T_n(\sigma,\sigma')=\sum\limits_{\tau \in C_{n+1}}p^{\uparrow}(\sigma,\tau)p^\downarrow(\tau,\sigma').$$

In the up-down chain we consider, the up-step kernel $p^\uparrow_{(0,\theta)}$ given by an $(0,\theta)$-ordered Chinese Restaurant Process growth step. In the Chinese Restaurant
Process analogy, we view the initial composition $\tau=(\tau_1,\dots,\tau_k) \in C_n$ as a numbered list of customers at $k$ occupied tables in a restaurant. In our setting, we think of customers as the alleles and the tables as the allelic types, so that $\tau_i$ is the number of alleles of type $i$ on the list and $\theta$ represents our mutation parameter. An up-step from $\tau$ then corresponds to the birth of a new allele that is assign its allelic type according to the following rules:
\begin{itemize}
    \item The new allele has the allelic type $i$ with probability $\tau_i/(n+\theta),$ resulting in a step from $\tau$ to $(\tau_1,\dots,\tau_{i-1},\tau_i+1,\dots,\tau_k).$
    \item The new allele has a new allelic type with probability $\theta/(n+\theta).$ To preserve the order of the ages of allele types, the new type appear at the beginning of the list, resulting in a step from $\tau$ to $(1,\tau_1,\tau_2,\dots,\tau_k).$ Where we go from left to right in order of youngest to oldest allelic type. 
\end{itemize}

The down-step kernel $p^\downarrow$ can be thought as describing an allele dying. So, if we view an initial composition $\tau \in C_{n+1}$ as describing the arrangement of the alleles by their types from youngest to oldest. A down-step from $\tau$ then corresponds to a uniformly random allele dying. Hence,
\begin{itemize}
    \item an allele of type $i$ dies with probability $\tau_i/(n+1),$ resulting in a step from $\tau$ to 
\end{itemize}
\[
\begin{cases}
(\tau_1,\dots,\tau_{i-1},\tau_i-1,\tau_{i+1},\dots,\tau_k) & \text{if } \tau_i > 1 \\
(\tau_1,\dots,\tau_{i-1},\tau_{i+1},\dots,\tau_k) & \text{if } \tau_i = 1 
\end{cases}
\]

It is clear then that the rightmost column of the resulting tuple represents the oldest allelic type in the population at that time. In general, the ages of the allelic types are ordered from oldest to youngest when seen from right to left. With this setting, we can then study the oldest allele type, and more generally we can consider the k-th oldest allelic types in our population.

We are working with an analogue case to the one considered in \cite{rivera2022leftmost}, where they have an up-down chain with an up-step kernel $p^\uparrow_{(\alpha,0)}$ given by an $(\alpha,0)$-ordered Chinese Restaurant Process growth step and a uniform down step. They study the behavior of the leftmost column, but unlike the $(0,\theta)$ case we study, the $(\alpha,0)$ case does not have a natural population genetics interpretation.  It is important to note that contrary to the usual convention of ordering oldest to youngest from left to right, which can be seen in \cite{balocchi2024bayesian} or \cite{ethier1981infinitely}, our model is the left-to-right reversal of that ordering.

Define
\[T^{(0,\theta)}_n(\sigma,\sigma')=\sum\limits_{\tau \in C_{n+1}}p_{(0,\theta)}^{\uparrow}(\sigma,\tau)p^\downarrow(\tau,\sigma').\]
Let $(X_n^{(0,\theta)}(k))_{k\geq{0}}$ be the Markov chain in $C_n$ with the transition kernel $T_n^{(0,\theta)}$.  The projection $\psi(\sigma)=\sigma_{l(\sigma)}$ for $\sigma\neq\varnothing$ gives rise to the rightmost column processes defined by $Z_n^{(0,\theta)}=\psi(X_n^{(0,\theta)})$.

Our two main results are the following.

\begin{thm}
If $(X_n^{(0,\theta)}(k))_{k\geq{0}}$ is running in stationarity then $Z_n^{(0,\theta)}$ is a Markov chain.
\end{thm}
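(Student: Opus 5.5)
The plan is to use the Rogers--Pitman intertwining criterion for a Markov function. I will build a kernel $\Lambda$ from $\{1,\dots,n\}$ to $C_n$ and a kernel $Q$ on $\{1,\dots,n\}$ with three properties: $\Lambda(m,\cdot)$ is supported on $\{\sigma:\psi(\sigma)=m\}$; the stationary law $\pi_n$ of $X_n^{(0,\theta)}$ satisfies $\pi_n(\sigma)=\P(Z=\psi(\sigma))\,\Lambda(\psi(\sigma),\sigma)$; and $\Lambda T_n^{(0,\theta)} = Q\Lambda$. Given these, the criterion says that in stationarity $Z_n^{(0,\theta)}$ is Markov with kernel $Q$.

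\textbf{Step 1 (stationary law).} I will identify $\pi_n$ with the left-to-right reversal of the $(0,\theta)$ regenerative composition of $n$, i.e.\ the ordered Chinese Restaurant Process law. This should follow from the framework of \cite{rivera2023diffusive} for ordered CRP up-down chains, and can also be checked directly from detailed balance with the Ewens-type weights. The structural fact I need is the regenerative/deletion property read from the right. Under $\pi_n$ the rightmost (oldest) block $m$ has an explicit law $q_n(m)$, and conditionally on it the remaining composition $(\sigma_1,\dots,\sigma_{l-1})$ has law $\pi_{n-m}$. I therefore set $\Lambda(m,\sigma)=\pi_{n-m}(\sigma_1,\dots,\sigma_{l-1})$ when $\sigma_l=m$, with the convention $\pi_0=\delta_\varnothing$.

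\textbf{Step 2 (intertwining away from extinction).} Start from $\sigma=(\rho,m)$ with $\rho\sim\pi_{n-m}$ and run one up-step and one down-step.
\begin{itemize}
\item Up-step: with probability $m/(n+\theta)$ the oldest block grows. Otherwise, with probability $(n-m+\theta)/(n+\theta)$, the step acts on $\rho$, and conditionally it is exactly a $(0,\theta)$ ordered CRP step on $\rho$. A new type is placed on the left, so it lands in $\rho$, and the weights are $\rho_i/(n-m+\theta)$ and $\theta/(n-m+\theta)$.
\item Down-step: the uniform death either hits the oldest block, with probability equal to its size over $n+1$, or acts as a uniform deletion on $\rho$.
\end{itemize}
Two facts then finish the non-extinction case. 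The CRP growth step maps $\pi_{n-m}$ to $\pi_{n-m+1}$, and uniform deletion maps $\pi_{k+1}$ to $\pi_k$; this is the sampling consistency of the regenerative composition. Hence whenever the oldest block survives with new size $m'$, the conditional law of the new $\rho'$ given the new $Z$ is $\pi_{n-m'}$, and the transition probability of $Z$ depends only on $m$. This gives the entries $Q(m,m\pm1)$ and $Q(m,m)$.

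\textbf{Step 3 (extinction).} The delicate case is $m=1$ with the oldest individual dying and no up-step on it. The new composition is then $\rho'$ alone, a composition of $n$. By Step 2 its law is $\pi_n$, obtained by growing $\pi_{n-1}$ and, on this event, not deleting from it. So the new oldest block has law $q_n(\cdot)$, and given it the remainder is again $\pi_{n-m'}$. This yields $Q(1,m')$ as the extinction probability times $q_n(m')$, plus the non-extinction contribution, and $\Lambda T_n=Q\Lambda$ holds in all cases.

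I expect the main obstacle to be Step 1. It requires verifying, with the right-to-left (youngest-first) convention, that the stationary law has exactly this regenerative deletion structure with the oldest block peeled off first. It also requires checking that growth-then-deletion preserves the family $\{\pi_k\}$ conditionally on the events where the oldest block is untouched. My first attempt will be a direct verification using the explicit ordered-CRP probability formula.
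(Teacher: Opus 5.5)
Your proposal is correct and is essentially the paper's argument. It uses the same intertwining kernel $\Lambda_n(i,(\sigma,i))=M^{(0,\theta)}_{n-i}(\sigma)$, the same four-way split according to whether the up- and down-steps touch the oldest block (Proposition~\ref{prop:Tform}), and the same use of $M^{(0,\theta)}_{n-1}p^\uparrow_{(0,\theta)}=M^{(0,\theta)}_n=M^{(0,\theta)}_{n+1}p^\downarrow$ together with the regenerative identity \eqref{eq:1} for the extinction case, yielding $Q_n^{(0,\theta)}(i,j)=r_{i,j}+r_{1,0}\mathbbm{1}(i=1)\upsilon_n^{(0,\theta)}(j)$.

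One small caution: drop the idea of verifying stationarity by detailed balance, because the chain is not reversible in general (on $C_3$, detailed balance between $(1,2)$ and $(2,1)$ holds only when $\theta=2$). Stationarity of $M_n^{(0,\theta)}$ follows at once from the two consistency relations you already invoke in Step~2.
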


For a more precise statement, see Proposition \ref{prop:rightmost}, which also provides the transition matrix of $Z_n^{(0,\theta)}$.  Indeed, we can also prove that the vector of numbers of individuals with each of the $k$ oldest alleles is a Markov process, see Proposition \ref{propkoldest}.

\begin{thm}\label{thmscaling}
Let $\alpha_n = (n+\theta)(n+1)$.  If $(X_n^{(0,\theta)}(k))_{k\geq{0}}$ is running in stationarity then 
\[
\left( \frac{1}{n} Z_n^{(0,\theta)}(\lfloor \alpha_n t \rfloor), t \ge 0\right) \Rightarrow Z^{(0,\theta)}
\] 
in the Skorokhod topology where $Z^{(0,\theta)}$ is a diffusion on $[0,1]$ jumping in from the boundary at $0$ with generator $\mathcal{L}:\mathcal{D}\subseteq C[0,1]\xrightarrow{}C[0,1]$ given by
       $$\mathcal{L}f(x)=x(1-x)f''(x)-\theta xf'(x)$$
       for $x\in (0,1),$ where $\mathcal{D}$ forms a core for the true infinitesimal generator and consists of functions $f \in C^2([0,1])$ satisfying 
       \begin{enumerate}
           \item $\int_{0}^{1} (f(x)-f(0))\theta(1-x)^{\theta-1} \,dx=0,$ and
           \item $\lim_{x\rightarrow1}f'(x)(1-x)^\theta = 0$ if $\theta < 1$.
       \end{enumerate}
\end{thm}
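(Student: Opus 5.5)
The plan is to apply the standard generator-convergence criterion for Markov chains converging to a Feller process (Ethier–Kurtz, Theorem 4.2.11 / Corollary 4.8.9). We take as given that the closure of $(\mathcal{L},\mathcal{D})$ generates a Feller semigroup on $C[0,1]$ and that $\mathcal{D}$ is a core. If this is not available from \cite{rivera2023diffusive}, I would first establish it via Hille–Yosida, solving $\lambda f-\mathcal{L}f=g$ explicitly with the scale function $s'(x)=(1-x)^{-\theta}$ and speed measure of $x(1-x)\partial_x^2-\theta x\partial_x$. Then it suffices to check two things. First, the initial laws of $\frac1n Z_n^{(0,\theta)}(0)$ converge to the stationary law of $Z^{(0,\theta)}$. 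Second, with $T_n^Z$ the transition matrix of $Z_n^{(0,\theta)}$ from Proposition~\ref{prop:rightmost}, and $\pi_n f(m)=f(m/n)$,
\[
\sup_{1\le m\le n}\Bigl|\alpha_n\bigl(T_n^Z\pi_n f-\pi_n f\bigr)(m)-\mathcal{L}f(m/n)\Bigr|\longrightarrow 0,\qquad f\in\mathcal{D}.
\]

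\textbf{Step 1: write out the transition structure.} From Proposition~\ref{prop:rightmost}, when $Z_n=m$ the chain behaves as follows.
\begin{itemize}
\item[(a)] It moves to $m+1$ with probability roughly $\frac{m}{n+\theta}\cdot\frac{n-m}{n+1}$.
\item[(b)] It moves to $m-1$ with probability roughly $\frac{n-m+\theta}{n+\theta}\cdot\frac{m}{n+1}$, with the exact $\pm$ corrections coming from the new individual.
\item[(c)] When $m=1$ and the lone oldest individual dies, which has probability of order $1/n$, it jumps to a random value $J_n$. Its law $\mu_n$ is the stationary law of the last block of the composition remaining after that block is removed.
\end{itemize}
I expect $\mu_n$ to be explicit, and $\mu_n(\cdot\, n)$ to converge to $\theta(1-x)^{\theta-1}\,dx$, that is, to Beta$(1,\theta)$. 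This matches condition~1. The stationary law of $Z_n/n$ should also be explicit from the regenerative $(0,\theta)$ structure, which gives the initial-law convergence directly.

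\textbf{Step 2: interior expansion.} For $m\ge 2$ (in fact for all $m$, apart from the jump term), Taylor expand $f((m\pm1)/n)$ to second order. Multiplying by $\alpha_n=(n+\theta)(n+1)$ gives the following.
\begin{itemize}
\item The drift is $n\bigl[m(n-m)-m(n-m+\theta)\bigr]/n\approx-\theta x f'(x)$.
\item The diffusion term is $\tfrac12\cdot 2x(1-x)f''(x)$.
\end{itemize}
The errors are $O(\|f''\|\,\omega_{f''}(1/n)+1/n)$ uniformly in $x$. Near $x=1$, condition~2 (for $\theta<1$) is what keeps $f'$ controlled, since $f\in C^2[0,1]$ already forces boundedness. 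So the interior estimate is routine.

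\textbf{Step 3: the boundary at $0$, the main obstacle.} At $m=1$ the generator picks up an extra term of size
\[
\alpha_n\cdot\tfrac{c}{n}\Bigl(\E f(J_n/n)-f(1/n)\Bigr)\approx c\,n\Bigl(\textstyle\int f\,d\tilde\mu_n-f(0)\Bigr)+O(1),
\]
where $\tilde\mu_n$ is the law of $J_n/n$. This is of order $n$ times a quantity that vanishes only in the limit. The plan is to use condition~1, $\int(f-f(0))\,d\mathrm{Beta}(1,\theta)=0$, and to show that $\tilde\mu_n$ agrees with Beta$(1,\theta)$ to order $o(1/n)$ when integrated against $C^2$ functions. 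I would attempt this from the explicit formula for $\mu_n$, which should be a discrete Beta-type weight $\propto\theta\,\Gamma$-ratios, by Euler–Maclaurin. Simultaneously I would track the $O(1)$ term, which combines the missing down-move at $m=1$ with $f'(0)$, and must match $\mathcal{L}f(1/n)\to 0$.

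If the $o(1/n)$ agreement fails, the fallback is to modify $\pi_n f$ at $m=1$ by $O(1/n)$, replacing $f(1/n)$ with the $\tilde\mu_n$-average. This is allowed because the convergence theorem only needs $\|\pi_n f - f_n\|\to0$. The discrete version of condition~1 then holds exactly, and the boundary term becomes bounded and converges to $0=\lim_{x\to0}\mathcal{L}f(x)$ as given by the formula.

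Combining Steps 1–3 with the core property yields convergence of the finite-dimensional distributions and tightness. This gives the claimed convergence in the Skorokhod topology.
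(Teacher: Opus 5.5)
\textbf{The proposal has a genuine gap: the limit is not a Feller process on $C[0,1]$, so the generator-convergence route cannot work as stated.} You take for granted that the closure of $(\mathcal{L},\mathcal{D})$ generates a Feller semigroup on $C[0,1]$. This is false, so the Ethier--Kurtz theorems you cite do not apply, and the Hille--Yosida backup cannot succeed.

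\emph{Why the domain fails.} The jump-in density $p(x)=\theta(1-x)^{\theta-1}$ is integrable, so $\Phi(f)=\int_0^1(f(x)-f(0))p(x)\,dx$ is a \emph{bounded} linear functional on $C[0,1]$. Condition 1 says $\mathcal{D}\subseteq\ker\Phi$, which is a closed proper hyperplane. Hence $\mathcal{D}$ is not dense, and no strongly continuous semigroup on $C[0,1]$ can have this generator.

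\emph{The same fact, dynamically.} Started near $0$, the process hits $0$ almost at once and jumps to $V\sim p$. So $P_tf(x)\approx\E f(V)$ while $f(x)\approx f(0)$, and $\sup_x|P_tf(x)-f(x)|\not\to0$ unless $\Phi(f)=0$.

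\emph{A martingale-problem version on $[0,1]$ is not well posed either.} Every $f\in\mathcal{D}$ has $\mathcal{L}f(0)=0$, so the path frozen at $0$ is a solution. Also, $\mathcal{D}$ cannot distinguish $\delta_0$ from $p\,dx$.

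\emph{How the paper avoids this.} The paper flags exactly this obstruction right after the statement and uses generator convergence only where the limit \emph{is} Feller.
(i) It works with the chain stopped on hitting $0$. There the limit, the diffusion absorbed at $0$, is Feller, and your Step 2 computation is precisely what is needed.
(ii) It upgrades this to joint convergence with the hitting time of $0$. This is not automatic, since hitting times are not Skorokhod-continuous; the paper uses the bound $\E_x\tilde\tau_n\le\kappa g_0(x)$.
(iii) It concatenates i.i.d.\ excursions started from $\upsilon_n$. The local limit $n\upsilon_n(\lfloor nx\rfloor)\to p(x)$ is your expected Beta$(1,\theta)$ law.
(iv) It removes the time change $\beta^n$.

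\textbf{Step 3 also fails concretely.} The required $o(1/n)$ agreement between $\tilde\mu_n$ and Beta$(1,\theta)$ is false. Take $\theta=1$. Then $\upsilon_n$ is uniform on $\{1,\dots,n\}$, $\alpha_nr_{1,0}=n$ and $\alpha_nr_{1,2}=n-1$. The $f'(0)$ contributions cancel, and the right-endpoint Riemann-sum error gives $\alpha_n(T^Z_n\pi_nf-\pi_nf)(1)\to\tfrac12(f(1)-f(0))$. For $f(x)=x^2-\tfrac23x\in\mathcal{D}$ this limit is $\tfrac16$, not $0$.

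Your fallback of resetting $f_n(1)$ to the $\tilde\mu_n$-average does not give convergence to $0$. It leaves $(n-1)(f(2/n)-f_n(1))$ at $m=1$, which tends to $-\tfrac32$ in the same example. It also creates an $O(1)$ error at $m=2$, where $\alpha_nr_{2,1}\approx 2n$ multiplies $f_n(1)-f(1/n)=O(1/n)$.

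A cleverer $O(1/n)$ corrector, for instance from a discrete Poisson equation, could at best repair the discrete estimate. It would still not supply the Feller limit on $C[0,1]$ that your convergence theorem needs.
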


The reason there is only a boundary condition at $1$ if $\theta <1$ is because if $\theta<1$ then the boundary at $1$ is accessible from $(0,1)$, and in this case the condition is that the boundary is reflecting.  If $\theta\geq 1$ the boundary is inaccessible from $(0,1)$ and thus no boundary condition is needed.  The boundary condition at $0$ means that when the process hits $0$ it instantaneously jumps into $(0,1)$ and the density of the location it jumps to is $p(x) = \theta(1-x)^{\theta-1} $.  Because $p$ is a density (and in particular is integrable), the process $Z^{(0,\theta)}$ is not a Feller process, which means that Theorem \ref{thmscaling} cannot be proved using the standard convergence of generators arguments.  Instead, our argument goes though excursion theory by considering the pieces of the path between zeros.

\section{The Oldest Allele}
As above, let $(X_n^{(0,\theta)}(k))_{k\geq{0}}$ be the Markov chain in $C_n$ with the transition kernel $T_n^{(0,\theta)}$ as defined in \cite{rivera2023diffusive}. Let $M_n^{(0,\theta)}$ denote the unique stationary distribution of $(X_n^{(0,\theta)}(k))_{k\geq{0}}$. 
The projection $\psi(\sigma)=\sigma_{l(\sigma)}$ for $\sigma\neq\varnothing$ gives rise to the rightmost column processes defined by $Z_n^{(0,\theta)}=\psi(X_n^{(0,\theta)})$. Let $\upsilon_n^{(0,\theta)}=M_n^{(0,\theta)}\circ \psi^{-1}$, which tracks the number of individuals with the oldest allele.

It is clear that for $n \geq i\geq 1$ and $\sigma \in C_{n-i}$ we have that 
\begin{equation} \label{eq:1}
M_n^{(0,\theta)}(\sigma,i)=M_{n-i}^{(0,\theta)}(\sigma)\upsilon_n^{(0,\theta)}(i).
\end{equation} 

Moreover, the following condition holds for $n\geq1$:
\begin{equation} \label{eq:2}
    M_n^{(0,\theta)}=M_{n-1}^{(0,\theta)}p^\uparrow_{(0,\theta)}=M_{n+1}^{(0,\theta)}p^\downarrow
\end{equation}
since we are a special case with $\alpha=0$ of \cite[Proposition 1.1]{rivera2023diffusive}.
Consider taking an $(0,\theta)$ up-step from $(\sigma,i)$ followed by a down-step. Let $U$ be the event in which this up-step stacks a box in the last column of $(\sigma,i)$, and let $D$ be the event in which the down-step removes a box from the last column of a composition. Let $r_{i,i+1}=\mathbb{P}(U\cap D^c)$, $r_{i,i-1}=\mathbb{P}(U^c\cap D)$, $r_{i,i}^{(1)}=\mathbb{P}(U^c\cap D^c)$, $r_{i,i}^{(2)}=\mathbb{P}(U\cap D)$, and $r_{i,i}=r_{i,i}^{(1)}+r_{i,i}^{(2)}$. We have the following formulas

\begin{align*}
    r_{i,i-1}&=\frac{(n+\theta-i)i}{(n+\theta)(n+1)}, & r_{i,i+1}&=\frac{(n-i)i}{(n+\theta)(n+1)},\\
    r_{i,i}^{(1)}&=\frac{(n+\theta-i)(n+1-i)}{(n+\theta)(n+1)}, &  r_{i,i}^{(2)}&=\frac{(i+1)i}{(n+\theta)(n+1)}.\\
\end{align*}

The probability of these events does not depend on $\sigma.$ We use these events to find an identity relating the transition kernel of the $(0,\theta)$ chain. 

\begin{prop}\label{prop:Tform}
For $n\geq1$ and $(\sigma,i),(\sigma',j) \in C_n$, we have the following identity

\begin{align*} 
T_n^{(0,\theta)} ((\sigma,i),(\sigma',j))&= r_{i,i}^{(2)} \mathbbm{1} (\sigma=\sigma') \mathbbm{1} (i=j)+ r_{i,j}\mathbbm{1}(j=i+1)p^\downarrow(\sigma,\sigma')\\
&+r_{i,j}\mathbbm{1}(j=i-1)p^\uparrow_{(0,\theta)}(\sigma,\sigma')\\
&+r_{1,0}\mathbbm{1}(i=1)p^\uparrow_{(0,\theta)}(\sigma,(\sigma',j))\\
&+r_{i,i}^{(1)}\mathbbm{1}(j=i)T_{n-i}^{(0,\theta)}(\sigma,\sigma')
\end{align*}
\end{prop}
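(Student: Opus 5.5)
The plan is to decompose one step of the chain started at $(\sigma,i)$, with $\sigma\in C_{n-i}$, according to the four events $U\cap D$, $U\cap D^c$, $U^c\cap D$ and $U^c\cap D^c$ introduced above. For each event we identify the conditional law of the resulting composition and then sum the contributions.

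The key structural fact is that a $(0,\theta)$ up-step restricted to $U^c$ acts on $\sigma$ alone. Under $U^c$, the new box either joins column $m$ of $\sigma$ (probability $\sigma_m/(n+\theta)$) or opens a new leftmost column (probability $\theta/(n+\theta)$). Since the denominator is $n+\theta$ rather than $n-i+\theta$, the conditional law given $U^c$ is
\[
\frac{n-i+\theta}{n+\theta}\cdot p^\uparrow_{(0,\theta)}(\sigma,\cdot)\Big/\frac{n+\theta-i}{n+\theta},
\]
that is, exactly $p^\uparrow_{(0,\theta)}(\sigma,\cdot)$ on $C_{n-i+1}$, with the last column $i$ untouched. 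This holds also when $\sigma=\varnothing$, where the only move is the new column. Similarly, a uniform down-step conditioned not to hit the last column removes a uniform box of the remaining part, so its conditional law is $p^\downarrow$ applied to that part. One must check that $U$ and the down-step interact as claimed, since $D$ depends on the post-up-step configuration; this dependence is why the $r$'s were computed jointly, and we take those formulas as given.

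Next we go through the four cases.

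\textbf{Case $U\cap D$.} A box is added to the last column and a box is removed from it, so the state returns to $(\sigma,i)$. This gives the term $r^{(2)}_{i,i}\mathbbm{1}(\sigma=\sigma')\mathbbm{1}(i=j)$.

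\textbf{Case $U\cap D^c$.} The state becomes $(\sigma,i+1)$ and then a uniform box of $\sigma$ dies. This gives $r_{i,i+1}\,p^\downarrow(\sigma,\sigma')$ with $j=i+1$.

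\textbf{Case $U^c\cap D$.} We get $(\tau,i)$ with $\tau\sim p^\uparrow_{(0,\theta)}(\sigma,\cdot)$, and then a box dies in the last column.
\begin{itemize}
\item If $i\ge 2$, the result is $(\tau,i-1)$, contributing $r_{i,i-1}\,p^\uparrow_{(0,\theta)}(\sigma,\sigma')\mathbbm{1}(j=i-1)$.
\item If $i=1$, the oldest allele goes extinct and the result is $\tau$ itself. Its last column becomes the new $j$ and the rest becomes $\sigma'$, contributing $r_{1,0}\,p^\uparrow_{(0,\theta)}(\sigma,(\sigma',j))$ with $\tau=(\sigma',j)$.
\end{itemize}
When $i=1$ the third term of the identity involves $j=0$, which is not a valid index, so it vanishes. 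This bookkeeping at the boundary is the main point requiring care.

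\textbf{Case $U^c\cap D^c$.} Both steps act on the $\sigma$-part while the last column stays equal to $i$. The up-step is $p^\uparrow_{(0,\theta)}(\sigma,\cdot)$ and, given $D^c$, the down-step is uniform on the $n-i+1$ boxes of $\tau$, which is $p^\downarrow(\tau,\cdot)$. Summing over $\tau$ gives $T^{(0,\theta)}_{n-i}(\sigma,\sigma')$, weighted by $r^{(1)}_{i,i}$, with $j=i$.

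The step I expect to be the main obstacle is justifying the factorization of the joint probability into $r\times$(conditional kernel) in the last two cases. One has to check that, conditional on $U^c$ and on the value of $\tau$, the probability of $D$ does not depend on $\tau$. It equals $i/(n+1)$, which depends only on $i$, and this is what makes the product form exact. Once this is verified, the remaining cases are immediate, and summing the four contributions yields the identity.
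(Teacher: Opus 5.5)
Your proposal is correct and follows the paper's proof essentially step for step. Like the paper, you condition on the four events $U\cap D$, $U\cap D^c$, $U^c\cap D^c$ and $U^c\cap D$, splitting the last into $i>1$ and $i=1$, identify each conditional law as the appropriate $p^\uparrow_{(0,\theta)}$, $p^\downarrow$ or $T^{(0,\theta)}_{n-i}$ kernel, and sum. Your extra checks fill in details the paper leaves implicit: that the up-step conditioned on $U^c$ is exactly $p^\uparrow_{(0,\theta)}(\sigma,\cdot)$, that $\mathbb{P}(D\mid U^c,\tau)=i/(n+1)$ does not depend on $\tau$, and the $\sigma=\varnothing$ and $j=0$ edge cases.
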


\begin{proof}
    Given a composition $\tau=(\tau_1,\tau_2,...,\tau_{l(\tau)})$, let $\tau_1^{l(\tau)-1}=(\tau_1,\tau_2,...,\tau_{l(\tau)-1})$ be the composition obtained by removing the last column of $\tau$.  Fix $(\sigma,i),(\sigma',j) \in C_n$. Let $C^\uparrow$ be the composition obtained by performing an $(0,\theta)$ up-step from $(\sigma,i)$ and $C^\downarrow$ be the composition obtained by performing a down-step from $C^\uparrow$.  We compute the following conditional probabilities to obtain the identity. 

    Condition on $U\cap D$.
    \begin{align*} 
    \mathbb{P}(C^\downarrow=(\sigma',j)|U, D)&=\mathbb{P}(C^\downarrow=(\sigma',j)|C^\uparrow=(\sigma,i+1), D)\\
    &=\mathbbm{1}((\sigma',j)=(\sigma,i))\\
    &=\mathbbm{1}(\sigma'=\sigma)\mathbbm{1}(j=i).
    \end{align*}
    
    We continue by conditioning on $U\cap D^c$.
    \begin{align*} 
    \mathbb{P}(C^\downarrow=(\sigma',j)|U, D^c)&=\mathbb{P}(C^\downarrow=(\sigma',j)|C^\uparrow=(\sigma,i+1), D^c)\\
    &=\mathbb{P}((C^\downarrow)_1^{l(C^\downarrow)-1}=\sigma',C^\downarrow_{l(C^\uparrow)}=j|C^\uparrow=(\sigma,i+1), D^c)\\
    &=\mathbbm{1}(j=i+1)\mathbb{P}((C^\downarrow)_1^{l(C^\downarrow)-1}=\sigma'|(C^\uparrow)_1^{l(C^\uparrow)-1}=\sigma, D^c)\\
    &=\mathbbm{1}(j=i+1)p^\downarrow(\sigma,\sigma').
    \end{align*}
    
    Next we condition on $U^c\cap D^c$.
    \begin{align*}
    &\mathbb{P}(C^\downarrow=(\sigma',j)|U^c, D^c)=\mathbbm{1}(j=i)\mathbb{P}((C^\downarrow)_1^{l(C^\downarrow)-1}=\sigma'|U^c, D^c)\\
    &=\mathbbm{1}(j=i)\sum\limits_{\tau\in C_{n+1-i}} \mathbb{P}((C^\uparrow)_1^{l(C^\uparrow)-1}=\tau|U^c, D^c) \mathbb{P}((C^\downarrow)_1^{l(C^\downarrow)-1}=\sigma'|C^\uparrow=(\tau,i),D^c)\\
    &=\mathbbm{1}(j=i)\sum\limits_{\tau\in C_{n+1-i}} \mathbb{P}((C^\uparrow)_1^{l(C^\uparrow)-1}=\tau|U^c) \mathbb{P}((C^\downarrow)_1^{l(C^\downarrow)-1}=\sigma'|(C^\uparrow)_1^{l(C^\uparrow)-1}=\tau,D^c)\\
    &=\mathbbm{1}(j=i)\sum\limits_{\tau\in C_{n+1-i}}p^\uparrow_{(0,\theta)}(\sigma,\tau)p^\downarrow(\tau,\sigma')\\
    &=\mathbbm{1}(j=i)T_{n-i}^{(0,\theta)}(\sigma,\sigma').
     \end{align*}
    
    Finally, we condition on  $U^c\cap D$. It is important to note that when $i=1$ the rightmost column disappears when the event $U^c\cap D$ holds. So, we need to take into account the following two cases:
    
    Case 1. $i>1$
    \begin{align*}
    \mathbb{P}(C^\downarrow=(\sigma',j)|U^c, D)&=\mathbb{P}(C^\downarrow=(\sigma',j)|C^\uparrow_{l(C^\uparrow)}=i, D)\\
    &=\mathbbm{1}(j=i-1)\mathbb{P}(\sigma'=(C^\uparrow)_1^{l(C^\uparrow)-1}|U^c)\\
    &=\mathbbm{1}(j=i-1)p^\uparrow_{(0,\theta)}(\sigma,\sigma').
    \end{align*}    
    
    Case 2. $i=1$
    \begin{align*}
    \mathbb{P}(C^\downarrow=(\sigma',j)|U^c, D)&=\mathbb{P}(C^\downarrow=(\sigma',j)|C^\uparrow_{l(C^\uparrow)}=1, D)\\
    &=\mathbb{P}((C^\uparrow)_1^{l(C^\uparrow)-1}=(\sigma',j)|U^c)\\
    &=p^\uparrow_{(0,\theta)}(\sigma,(\sigma',j)).
    \end{align*}

    Combining all of the conditional probabilities terms we get our identity.
\end{proof}

Let $n\geq1$. We define a transition kernel $\Lambda_n$ from $[n]$ to $C_n$ by
$$\Lambda_n(i,(\sigma,i))=M_{n-i}^{(0,\theta)}(\sigma)$$
Let $\psi_n(\sigma,i)=\mathbbm{1}(\sigma_{l(\sigma)}=i)$ be a transition kernel from $C_n$ to $[n]$. 

\begin{prop}\label{prop:rightmost}
    For $n\geq1$, the transition kernel $Q_n^{(0,\theta)}=\Lambda_nT_n^{(0,\theta)}\psi_n$ satisfies 
    $$\Lambda_nT_n^{(0,\theta)}=Q_n^{(0,\theta)}\Lambda_n.$$
    Moreover, $Z_n^{(0,\theta)}$ is a time-homogeneous Markov chain with transition kernel $Q_n^{(0,\theta)}$ given explicitly by 
    $$Q_n^{(0,\theta)}(i,j)=r_{i,j}+r_{1,0}\mathbbm{1}(i=1)\upsilon_n^{(0,\theta)}(j)$$
    when the initial distribution of $X_n^{(0,\theta)}$ is of the form $\mu\Lambda_n$.
\end{prop}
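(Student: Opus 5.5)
This is an intertwining (Rogers--Pitman type) argument. The plan is to verify $\Lambda_n T_n^{(0,\theta)} = Q_n^{(0,\theta)}\Lambda_n$ by computing the left side entrywise with Proposition \ref{prop:Tform}. Once this holds, the standard Rogers--Pitman criterion gives the Markov property of $Z_n^{(0,\theta)} = \psi(X_n^{(0,\theta)})$ when the initial law is $\mu\Lambda_n$. The criterion requires $\Lambda_n\psi_n = I$, which is immediate since $\Lambda_n(i,\cdot)$ is supported on compositions with last column $i$. Stationarity is a special case: by \eqref{eq:1}, $M_n^{(0,\theta)} = \upsilon_n^{(0,\theta)}\Lambda_n$.

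For fixed $i$ and target $(\sigma',j)$, we have
\[
(\Lambda_nT_n^{(0,\theta)})(i,(\sigma',j))=\sum_{\sigma\in C_{n-i}}M_{n-i}^{(0,\theta)}(\sigma)\,T_n^{(0,\theta)}((\sigma,i),(\sigma',j)).
\]
We insert the five terms of Proposition \ref{prop:Tform} and treat each one separately.

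The $r^{(2)}_{i,i}$ term gives $r^{(2)}_{i,i}\mathbbm{1}(j=i)M_{n-i}^{(0,\theta)}(\sigma')$.

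The $j=i+1$ term gives $r_{i,i+1}\,(M_{n-i}^{(0,\theta)}p^\downarrow)(\sigma') = r_{i,i+1}M_{n-i-1}^{(0,\theta)}(\sigma')$ by \eqref{eq:2}.

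The $j=i-1$ term (for $i>1$) gives $r_{i,i-1}\,(M_{n-i}^{(0,\theta)}p^\uparrow_{(0,\theta)})(\sigma') = r_{i,i-1}M_{n-i+1}^{(0,\theta)}(\sigma')$, again by \eqref{eq:2}.

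The $T_{n-i}$ term gives $r^{(1)}_{i,i}M_{n-i}^{(0,\theta)}(\sigma')$, by stationarity of $M_{n-i}^{(0,\theta)}$ for $T_{n-i}^{(0,\theta)}$, which itself follows from \eqref{eq:2}.

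In each of these cases the result equals $r_{i,j}\Lambda_n(j,(\sigma',j))$.

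The remaining term is the extinction term, present only when $i=1$. Here $\sigma\in C_{n-1}$, and the term is
\[
r_{1,0}\,(M_{n-1}^{(0,\theta)}p^\uparrow_{(0,\theta)})((\sigma',j)) = r_{1,0}M_n^{(0,\theta)}(\sigma',j) = r_{1,0}\,\upsilon_n^{(0,\theta)}(j)\,M_{n-j}^{(0,\theta)}(\sigma'),
\]
using \eqref{eq:2} and then \eqref{eq:1}. This is exactly $r_{1,0}\upsilon_n^{(0,\theta)}(j)\Lambda_n(j,(\sigma',j))$.

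Summing all contributions, the left side equals $\sum_j\bigl(r_{i,j}+r_{1,0}\mathbbm{1}(i=1)\upsilon_n^{(0,\theta)}(j)\bigr)\Lambda_n(j,(\sigma',j))$. Because the supports of the $\Lambda_n(j,\cdot)$ are disjoint, this simultaneously proves the intertwining and identifies $Q_n^{(0,\theta)}=\Lambda_nT_n^{(0,\theta)}\psi_n$ with the stated formula.

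The main obstacle is bookkeeping at the edge cases rather than any conceptual difficulty:
\begin{itemize}
\item When $i=1$, the $U^c\cap D$ event must contribute only through the extinction term and not through $r_{1,0}\mathbbm{1}(j=0)$. The entry $r_{i,i-1}$ with $j=0$ is not a state of $[n]$, so the notation has to be read so that this probability is not counted twice.
\item When $i=n$, $\sigma=\varnothing$ and we need conventions such as $M_0^{(0,\theta)}=\delta_\varnothing$ and $r_{n,n+1}=0$.
\end{itemize}
Finally, the Markov property itself needs the usual Rogers--Pitman induction. One shows by induction on $k$ that
\[
\mathbb{P}\bigl(Z(0)=i_0,\dots,Z(k)=i_k,\;X(k)\in\cdot\bigr)=\mu(i_0)\,Q(i_0,i_1)\cdots Q(i_{k-1},i_k)\,\Lambda_n(i_k,\cdot),
\]
where each inductive step uses the intertwining together with the fact that $\psi_n$ restricted to the support of $\Lambda_n(j,\cdot)$ picks out $j$.
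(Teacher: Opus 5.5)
Your proposal is correct and takes essentially the same route as the paper. You expand $\Lambda_nT_n^{(0,\theta)}(i,(\sigma',j))$ term by term using Proposition \ref{prop:Tform}, collapse the sums with \eqref{eq:2} (including stationarity of $M_{n-i}^{(0,\theta)}$ under $T_{n-i}^{(0,\theta)}$), rewrite the extinction term with \eqref{eq:1}, and read off $Q_n^{(0,\theta)}$ from $\Lambda_n\psi_n=I$. Your Rogers--Pitman induction and your remarks on the edge cases $i=1$ and $i=n$ only spell out steps that the paper leaves implicit.
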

\begin{proof}
    Let $C_n(i,j)=r_{i,j}+r_{1,0}\mathbbm{1}(i=1)\upsilon_n^{(0,\theta)}(j)$. Fix $i,j\in[n]$ and $\sigma'\in C_{n-j}$. We proceed by computing 
    \begin{align*}
        \Lambda_nT_n^{(0,\theta)}(i,(\sigma',j))&=\sum\limits_{\sigma\in C_{n-i}}\Lambda_n(i,(\sigma,i))T_n^{(0,\theta)}((\sigma,i),(\sigma',j))\\
        &=\sum\limits_{\sigma\in C_{n-i}}M_{n-i}^{(0,\theta)}(\sigma)T_n^{(0,\theta)}((\sigma,i),(\sigma',j))\\
        &=\sum\limits_{\sigma\in C_{n-i}}M_{n-i}^{(0,\theta)}(\sigma)[(r_{i,j}^{(2)}\mathbbm{1}(\sigma=\sigma')+r_{i,i}^{(1)}T_{n-i}^{(0,\theta)}(\sigma,\sigma'))\mathbbm{1}(j=i)]\\
        &+\sum\limits_{\sigma\in C_{n-i}}M_{n-i}^{(0,\theta)}(\sigma)r_{i,j}[\mathbbm{1}(j=i+1)p^\downarrow(\sigma,\sigma')+\mathbbm{1}(j=i-1))p^\uparrow_{(0,\theta)}(\sigma,\sigma')]\\
        &+\sum\limits_{\sigma\in C_{n-i}}M_{n-i}^{(0,\theta)}(\sigma)r_{1,0}\mathbbm{1}(i=1)p^\uparrow_{(0,\theta)}(\sigma,(\sigma',j))
\end{align*}

From Equation (2) we can simplify to 
\begin{align*}
        \Lambda_nT_n^{(0,\theta)}(i,(\sigma',j))&=\mathbbm{1}(j=i)[r_{i,i}^{(1)}M_{n-j}^{(0,\theta)}(\sigma')+r_{i,i}^{(2)}M_{n-j}^{(0,\theta)}(\sigma')]\\
        &+ r_{i,j}[\mathbbm{1}(j=i+1)M_{n-j}^{(0,\theta)}(\sigma')+\mathbbm{1}(j=i-1)M_{n-j}^{(0,\theta)}(\sigma')]\\
        &+r_{1,0}\mathbbm{1}(i=1)M_n^{(0,\theta)}(\sigma',j)\\
        &=M_{n-j}^{(0,\theta)}(\sigma')[r_{i,i}^{(1)}\mathbbm{1}(j=i)+r_{i,i}^{(2)}\mathbbm{1}(j=i)+r_{i,j}(\mathbbm{1}(j=i+1)+\mathbbm{1}(j=i-1))]\\
        &+ r_{1,0}\mathbbm{1}(i=1)M_n^{(0,\theta)}(\sigma',j) 
    \end{align*}
    From Equation (1) we have that $M_n^{(0,\theta)}(\sigma,i)=M_{n-i}^{(0,\theta)}(\sigma)\upsilon_n^{(0,\theta)}(i).$ So,
    \begin{align*}
        \Lambda_nT_n^{(0,\theta)}(i,(\sigma',j))&=M_{n-j}^{(0,\theta)}(\sigma')[r_{i,i}^{(1)}\mathbbm{1}(j=i)+r_{i,i}^{(2)}\mathbbm{1}(j=i)+r_{i,j}(\mathbbm{1}(j=i+1)+\mathbbm{1}(j=i-1))]\\
        &+r_{1,0}\mathbbm{1}(i=1)M_{n-j}^{(0,\theta)}(\sigma')\upsilon_n^{(0,\theta)}(j)\\
        &=r_{i,j}M_{n-j}^{(0,\theta)}(\sigma')+r_{1,0}\mathbbm{1}(i=1)M_{n-j}^{(0,\theta)}(\sigma')\upsilon_n^{(0,\theta)}(j)\\
        &=(r_{i,j}+r_{1,0}\mathbbm{1}(i=1)\upsilon_n^{(0,\theta)}(j))M_{n-j}^{(0,\theta)}(\sigma')\\
        &=C_n(i,j)\Lambda_n(j,(\sigma',j))\\
        &=(C_n\Lambda_n)(i,(\sigma',j))
    \end{align*}
Note that $\Lambda(j,\cdot)$ is supported on \{ $\sigma \in C_n: \sigma_{l(\sigma)}=j$\}, and $\Lambda_n\psi_n$ is the identity kernel on $[n].$ Thus $Q_n^{(0,\theta)}=\Lambda_nT_n^{(0,\theta)}\psi_n=C_n\Lambda_n\psi_n=C_n,$ which gives us our explicit description of $Q_n^{(0,\theta)}.$ 
\end{proof}

\section{The K Oldest Alleles Process}

In a similar manner to the study of the rightmost column process of our Markov chain $(X_n^{(0,\theta)}(t))_{t\geq0}$ in $C_n$ we can study the K-th rightmost column process. For $l(\sigma)=l$ we have that $l>k$ or $l\leq k.$ However, for any $\sigma$ such that $l\leq k$ we can consider $\rho_\sigma=((0)_1^{k-l+1},\sigma).$ Clearly, $\rho_\sigma$ is an equivalent representation of the tuple $\sigma$ such that $l(\rho_\sigma)=k+1.$ Define a function for a tuple $\sigma$ in $C_n$ such that

\[
\psi_k(\sigma) =
\begin{cases}
(\sigma_{l-k+1},...,\sigma_l) & \text{if } l > k \\
((\rho_\sigma)_2^{k-l+1},\sigma) & \text{if } l \leq k 
\end{cases}
\]

This function gives rise to the K-th rightmost column process defined by $Y_n^{(0,\theta)}=\psi_k(X_n^{(0,\theta)})$ for $(X_n^{(0,\theta)}(t))_{t\geq0}.$ Let $\upsilon_{n,k}^{(0,\theta)}=M_n^{(0,\theta)}\circ \psi_k^{-1},$ the distribution of the K-th rightmost columns when the up-down chain is in stationarity. 

\begin{equation} \label{eq:2.1}
M_n^{(0,\theta)}(\sigma) =
\begin{cases}
M_{n-|\sigma_{l-k+1}^l|}^{(0,\theta)}(\sigma_1^{l-k})\upsilon_{n,k}^{(0,\theta)}(\sigma_{l-k+1}^l) & \text{if } l > k \\
M_{n-|\sigma|}^{(0,\theta)}((\rho_\sigma)_1)\upsilon_{n,k}^{(0,\theta)}((\rho_\sigma)_2^{k+1}) & \text{if } l \leq k 
\end{cases}
\end{equation}

Consider taking a $(0,\theta)$ up-step from $\sigma$ followed by a down-step. Let $E$ be the event in which this up-step stacks a box in any of the k-th rightmost columns of $\sigma$, and let $F$ be the event in which the down-step removes a box from the last k-th rightmost columns of a composition. Letting $w$ denote the total number of boxes in the k rightmost columns, we define $s_{w,w+1}=\mathbb{P}(E\cap F^c)$, $s_{w,w-1}=\mathbb{P}(E^c\cap F)$, $s_{w,w}^{(1)}=\mathbb{P}(E^c\cap F^c)$, and $s_{w,w}^{(2)}=\mathbb{P}(E\cap F)$. 

\begin{prop}
For $n\geq1,$ $\sigma=(\rho,\tau),\sigma'=(\rho',\tau')\in C_n$ such that $\tau$ and $\tau'$ represent the k rightmost columns, we have the following identity:

\begin{align*} 
T_n^{(0,\theta)} ((\rho,\tau),(\rho',\tau'))&=s_{w,w}^{(2)}\mathbbm{1}(\rho=(\rho',\tau'_1))T_{|\tau|}^{(0,0)}(\tau,(\tau')_2^k)\\
    &+s_{w,w}^{(2)}\mathbbm{1}(\rho=\rho')T_{|\tau|}^{(0,0)}(\tau,\tau')\\
    &+s_{w,w-1}p_{(0,\theta)}^{\uparrow}(\rho,(\rho',\tau'_1))p^{\downarrow}(\tau,(\tau')_2^k)\\
    &+s_{w,w-1}p_{(0,\theta)}^{\uparrow}(\rho,\rho')p^{\downarrow}(\tau,\tau')\\
    &+s_{w,w}^{(1)}\mathbbm{1}(\tau'=\tau)T^{(0,\theta)}_{|\rho|}(\rho,\rho')\\
    &+s_{w,w+1}p_{(0,0)}^{\uparrow}(\tau,\tau')p^{\downarrow}(\rho,\rho').
\end{align*}
\end{prop}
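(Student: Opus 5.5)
We mimic the proof of Proposition \ref{prop:Tform}, with the single last column replaced by the block $\tau$ of the $k$ rightmost columns. Fix $\sigma=(\rho,\tau)$ and $\sigma'=(\rho',\tau')$, and write $w=|\tau|$. As before, let $C^\uparrow$ be the result of a $(0,\theta)$ up-step from $\sigma$ and $C^\downarrow$ the result of the subsequent down-step. We decompose $T_n^{(0,\theta)}(\sigma,\sigma')$ over the four events $E\cap F$, $E\cap F^c$, $E^c\cap F$ and $E^c\cap F^c$. First we check that the probabilities of these events depend only on $w$ and $n$, not on the internal structure of $\rho$ or $\tau$:
\[
\mathbb{P}(E)=\frac{w}{n+\theta},\qquad \mathbb{P}(F\mid E)=\frac{w+1}{n+1},\qquad \mathbb{P}(F\mid E^c)=\frac{w}{n+1}.
\]
Here $E$ uses the fact that a new table is always opened on the far left, so it never lands in $\tau$.

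Next we compute the conditional law of $C^\downarrow$ on each event.

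\emph{On $E$.} The up-step restricted to $\tau$ adds a box to column $i$ of $\tau$ with probability proportional to $\tau_i$. This is exactly the $(0,0)$ up-step $p^\uparrow_{(0,0)}(\tau,\cdot)$, since no new column can be created inside $\tau$. Meanwhile $\rho$ is unchanged.

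\emph{On $E\cap F$.} The down-step then removes a uniform box of the enlarged block, so the block evolves by $T^{(0,0)}_{w}(\tau,\cdot)$.

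\emph{On $E\cap F^c$.} The block becomes $\tau'\sim p^\uparrow_{(0,0)}(\tau,\cdot)$, and $\rho$ undergoes a uniform down-step $p^\downarrow(\rho,\rho')$.

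\emph{On $E^c$.} The part $\rho$ undergoes $p^\uparrow_{(0,\theta)}$. On $E^c\cap F^c$ this is followed by a down-step within $\rho$, giving $T^{(0,\theta)}_{|\rho|}(\rho,\rho')$ with $\tau'=\tau$. On $E^c\cap F$ a uniform box is removed from $\tau$, giving $p^\downarrow(\tau,\cdot)$.

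The main obstacle is the bookkeeping of the $k$-window when a column of $\tau$ dies. This happens when a singleton column is removed by the down-step, on $E\cap F$ or on $E^c\cap F$. In that case the new $k$ rightmost columns consist of the last column of the (possibly updated) $\rho$ followed by the surviving $k-1$ columns. So $\tau'_1$ is taken from $\rho$ and $(\tau')_2^k$ is the result of the $\tau$-dynamics with one fewer column.

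This produces the two extra terms in the statement. On $E\cap F$ we get $\mathbbm{1}(\rho=(\rho',\tau'_1))\,T^{(0,0)}_{|\tau|}(\tau,(\tau')_2^k)$, where $\rho$ is unchanged and loses its last column to the window. On $E^c\cap F$ we get $p^\uparrow_{(0,\theta)}(\rho,(\rho',\tau'_1))\,p^\downarrow(\tau,(\tau')_2^k)$. We interpret $T^{(0,0)}$ and $p^\downarrow$ on the block as kernels between compositions, possibly of different lengths, so that the column-loss case and the no-loss case are disjoint events. We also use the convention $\rho_\sigma$ (padding with zeros) when $l(\sigma)\le k$.

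Finally we need the no-loss case on $E\cap F$, which gives the term $\mathbbm{1}(\rho=\rho')T^{(0,0)}_{|\tau|}(\tau,\tau')$. We must check that the two $E\cap F$ terms partition that event according to whether $l(\tau')$ matches, and similarly for the two $E^c\cap F$ terms. We also check that conditioning on $E$ or $E^c$ does not bias the choice within $\tau$ or within $\rho$; this follows by the same factorization as in the proof of Proposition \ref{prop:Tform}. Multiplying each conditional law by the corresponding $s$ and summing then gives the identity.
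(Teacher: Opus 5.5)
Your proof is correct, and it is the argument the paper intends. The paper states this $k$-column identity without a separate proof, as the direct analogue of the proof of Proposition \ref{prop:Tform}, and your four-event conditioning on $E\cap F$, $E\cap F^c$, $E^c\cap F$, $E^c\cap F^c$ (with the column-loss split playing the role of the case $i=1$ there) is exactly that analogue.

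The only thing to drop is the remark about zero-padding when $l(\sigma)<k$. There a newly opened table lands inside the $k$-window, so your step that it ``never lands in $\tau$'' fails, and the identity fails too: for $k=3$, $\sigma=(2)$, $\sigma'=(1,1)$ the right-hand side is $0$ while $T_2^{(0,\theta)}((2),(1,1))=\tfrac{2\theta}{3(2+\theta)}$. So the statement should be read, as your argument in fact does, with $l(\sigma),l(\sigma')\ge k$.
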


Let $1\leq k \leq n$. We define the set $B_{n,k}=\{\eta \in \bigcup_{i \in [n]}C_i | l(\eta)= k\}$ and the transition kernel $\Lambda_{n,k}$ from $B_{n,k}$ to $C_n$ by
$$\Lambda_{n,k}(\eta,(\rho,\eta))=M_{n-|\eta|}^{(0,\theta)}(\rho).$$
Let $\psi_{n,k}(\sigma,\gamma)=\mathbbm{1}(\sigma_{l(\sigma)-k}^{l(\sigma)}=\gamma)$ be a transition kernel from $C_n$ to $B_{n,k}$. 

\begin{prop}\label{propkoldest}
    For $n\geq1$, the transition kernel $Q_{n,k}^{(0,\theta)}=\Lambda_{n,k}T_n^{(0,\theta)}\psi_{n,k}$ satisfies 
    $$\Lambda_{n,k}T_n^{(0,\theta)}=Q_{n,k}^{(0,\theta)}\Lambda_{n,k}.$$
    Moreover, $Y_n^{(0,\theta)}$ is a time-homogeneous Markov chain with transition kernel $Q_{n,k}^{(0,\theta)}$ given explicitly by 
    \begin{align*}
        Q_{n,k}^{(0,\theta)}(\eta,\gamma)&=s_{w,w}^{(2)}\upsilon_{n-|\gamma|+\gamma_1}^{(0,\theta)}(\gamma_1)T_{|\eta|}^{(0,0)}(\eta, \gamma_2^k)\\
        &+s_{w,w}^{(2)}T_{|\eta|}^{(0,0)}(\eta, \gamma)\\
        &+s_{w,w-1}\upsilon_{n-|\gamma|+\gamma_1}^{(0,\theta)}(\gamma_1)p^{\downarrow}(\eta, \gamma_2^k)\\
        &+s_{w,w-1}p^{\downarrow}(\eta, \gamma)\\
        &+s_{w,w}^{(1)}\mathbbm{1}(\eta=\gamma)\\
        &+s_{w,w+1}p_{(0,0)}^{\uparrow}(\eta, \gamma)
    \end{align*}
    where $w=|\eta|$, when the initial distribution of $X_n^{(0,\theta)}$ is of the form $\mu\Lambda_{n,k}$.
\end{prop}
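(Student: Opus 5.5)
We follow the proof of Proposition \ref{prop:rightmost}, using the intertwining (Dynkin/Rogers--Pitman) criterion for the Markov property of a function of a Markov chain. The main step is to show that for every $\eta\in B_{n,k}$ and every $(\rho',\gamma)\in C_n$ whose $k$ rightmost columns form $\gamma$,
\[
\Lambda_{n,k}T_n^{(0,\theta)}(\eta,(\rho',\gamma)) = C_{n,k}(\eta,\gamma)\,M^{(0,\theta)}_{n-|\gamma|}(\rho'),
\]
where $C_{n,k}$ denotes the right-hand side of the displayed formula for $Q_{n,k}^{(0,\theta)}$. Two facts are then immediate. First, $\Lambda_{n,k}(\gamma,\cdot)$ is supported on compositions whose last $k$ columns equal $\gamma$, so $\Lambda_{n,k}\psi_{n,k}$ is the identity on $B_{n,k}$. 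Second, the display gives $\Lambda_{n,k}T_n^{(0,\theta)}=C_{n,k}\Lambda_{n,k}$, and hence $Q_{n,k}^{(0,\theta)}=\Lambda_{n,k}T_n^{(0,\theta)}\psi_{n,k}=C_{n,k}$. The Markov property of $Y_n^{(0,\theta)}$ for initial laws $\mu\Lambda_{n,k}$ follows by induction on time: the conditional law of $X_n^{(0,\theta)}(m)$ given $Y_n^{(0,\theta)}(0),\dots,Y_n^{(0,\theta)}(m)$ is $\Lambda_{n,k}(Y_n^{(0,\theta)}(m),\cdot)$, exactly as in the cited one-column argument.

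To verify the display, sum the identity of the preceding proposition (the decomposition of $T_n^{(0,\theta)}((\rho,\tau),(\rho',\tau'))$) against $M^{(0,\theta)}_{n-|\eta|}(\rho)$ over $\rho\in C_{n-|\eta|}$, with $\tau=\eta$. The terms split according to whether the block structure of the $k$ rightmost columns is preserved or whether the $k$-th column from the right enters $\gamma$.

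\textbf{Blocks preserved.} These are the terms with $\mathbbm{1}(\rho=\rho')$, $p^\uparrow_{(0,\theta)}(\rho,\rho')$, $T^{(0,\theta)}_{|\rho|}(\rho,\rho')$ and $p^\downarrow(\rho,\rho')$. Equation \eqref{eq:2} and stationarity of $M^{(0,\theta)}_{n-|\eta|}$ under $T^{(0,\theta)}_{n-|\eta|}$ turn $\sum_\rho M^{(0,\theta)}_{n-|\eta|}(\rho)K(\rho,\rho')$ into $M^{(0,\theta)}_{n-|\gamma|}(\rho')$. Here $|\gamma|$ equals $|\eta|$ or $|\eta|\pm1$, matching the size change of $\rho$. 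This yields the terms $s^{(2)}_{w,w}T^{(0,0)}_{|\eta|}(\eta,\gamma)$, $s_{w,w-1}p^\downarrow(\eta,\gamma)$, $s^{(1)}_{w,w}\mathbbm{1}(\eta=\gamma)$ and $s_{w,w+1}p^\uparrow_{(0,0)}(\eta,\gamma)$.

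\textbf{A column enters.} When a column of the $k$ rightmost columns dies out, the column $\gamma_1$ moves from $\rho$ into the window. The corresponding terms carry $\mathbbm{1}(\rho=(\rho',\gamma_1))$ and $p^\uparrow_{(0,\theta)}(\rho,(\rho',\gamma_1))$. Summing gives $M^{(0,\theta)}_{n-|\eta|}(\rho',\gamma_1)$ or $M^{(0,\theta)}_{n-|\eta|+1}(\rho',\gamma_1)$, by \eqref{eq:2} in the second case. Then apply \eqref{eq:1} at level $m=n-|\gamma|+\gamma_1$, the size of $(\rho',\gamma_1)$, to get $M^{(0,\theta)}_{n-|\gamma|}(\rho')\,\upsilon^{(0,\theta)}_{m}(\gamma_1)$. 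This is exactly the $\upsilon^{(0,\theta)}_{n-|\gamma|+\gamma_1}(\gamma_1)$ factor in the statement, and it explains why the subscript varies with $\gamma$.

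The main obstacle is bookkeeping rather than any new idea. One must check that the sizes match in each term, so that \eqref{eq:2} is applied at the correct level (up-steps from level $n-|\eta|$ land at $n-|\eta|+1$, and so on). One must also check that the coefficients $s$ do not depend on $\rho$, which is needed to pull them out of the sum; they depend only on $w=|\eta|$ and $n$, as with the $r$'s. Finally, the degenerate case $l(\sigma)\le k$, where $\rho=\varnothing$ and zero-padding is used, must be handled: there \eqref{eq:2.1} with $M_0^{(0,\theta)}(\varnothing)=1$ makes the same computation go through, and the entering-column terms vanish or are read with $\gamma_1=0$.
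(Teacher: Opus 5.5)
Your main computation is the paper's proof. You sum the six-term decomposition against $M^{(0,\theta)}_{n-|\eta|}(\rho)$ and collapse the four block-preserving sums by \eqref{eq:2} and stationarity. You collapse the two entering-column sums by \eqref{eq:2} followed by \eqref{eq:1} at level $n-|\gamma|+\gamma_1$. You then conclude with $\Lambda_{n,k}\psi_{n,k}=I$ and the Rogers--Pitman induction. For windows made of $k$ genuine columns this is correct, using your reading $\gamma_1=0$, $\upsilon^{(0,\theta)}_0(0)=1$ when $\rho=\varnothing$ and a window column dies.

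The step that fails is your closing claim that the padded case $l(\sigma)<k$ ``goes through''. The six-term decomposition presupposes that a mutation lands in $\rho$, outside the window, and so does your remark that the $s$'s depend only on $w$ and $n$. That holds exactly when $l(\sigma)\ge k$. If the window still contains at least two padding zeros, a mutation opens a new column inside a window that stays padded, and the formula cannot produce this. The window moves only through the $(0,0)$ kernels $T^{(0,0)}$, $p^\downarrow$, $p^\uparrow_{(0,0)}$, none of which opens a column. The only new column admitted is $\gamma_1$, entering from $\rho$.

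Concretely, take $k=n=3$, $\sigma=(3)$, $\eta=\psi_3(\sigma)=(0,0,3)$, and $\sigma'=(1,2)$, $\gamma=\psi_3(\sigma')=(0,1,2)$. A mutation followed by removal of a box from the old column gives $T^{(0,\theta)}_3((3),(1,2))=\frac{3\theta}{4(3+\theta)}$. Yet all six terms of $Q^{(0,\theta)}_{3,3}(\eta,\gamma)$ vanish:
\begin{itemize}
\item terms 1--2, because $T^{(0,0)}_3$ cannot split the column $(3)$;
\item terms 3, 4 and 6, because $|\gamma_2^3|=|\gamma|=|\eta|$;
\item term 5, because $\eta\neq\gamma$.
\end{itemize}
Such states are reached from $\mu\Lambda_{n,k}$ starts, e.g.\ $(1,1,1)\to(2,1)\to(3)$, so they cannot be discarded. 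The paper's proof is silent here too. Moreover, with $B_{n,k}$ taken literally as length-$k$ compositions, $Y_n^{(0,\theta)}$ can leave $B_{n,k}$.

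The repair is easy. On a padded state $|\eta|=n$, so $\Lambda_{n,k}(\eta,\cdot)=\delta_\sigma$. Since one step creates at most one column, every reachable $\sigma'$ has at most $k$ columns, and $\Lambda_{n,k}(\psi_k(\sigma'),\cdot)=\delta_{\sigma'}$. Hence the intertwining holds trivially there with $Q_{n,k}^{(0,\theta)}(\eta,\psi_k(\sigma'))=T_n^{(0,\theta)}(\sigma,\sigma')$, and the Markov property survives. The explicit six-term formula, however, must be restricted to $\eta$ with $k$ genuine columns.
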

\begin{proof}
    Fix $\eta,\gamma \in B_{n,k}$ and $\rho' \in C_{n-|\gamma|}.$ Let $w=|\eta|$. We proceed by computing
\begin{align*}
\Lambda_{n,k}T_n^{(0,\theta)}(\eta,(\rho',\gamma))&=\sum\limits_{\rho \in C_{n-|\eta|}}\Lambda_{n,k}(\eta,(\rho,\eta))T_n^{(0,\theta)}((\rho,\eta),(\rho',\gamma))\\
&=\sum\limits_{\rho \in C_{n-|\eta|}}M_{n-|\eta|}^{(0,\theta)}(\rho)T_n^{(0,\theta)}((\rho,\eta),(\rho',\gamma))
\end{align*}
Using the identity from the previous proposition we expand this into the following sum:
\begin{align*}
&=s_{w,w}^{(2)}\sum\limits_{\rho \in C_{n-|\eta|}}M_{n-|\eta|}^{(0,\theta)}(\rho)\mathbbm{1}(\rho=(\rho',\gamma_1))T_{|\eta|}^{(0,0)}(\eta,\gamma_2^k)\\
&+s_{w,w}^{(2)}\sum\limits_{\rho \in C_{n-|\eta|}}M_{n-|\eta|}^{(0,\theta)}(\rho)\mathbbm{1}(\rho=\rho')T_{|\eta|}^{(0,0)}(\eta,\gamma)\\
&+s_{w,w-1}\sum\limits_{\rho \in C_{n-|\eta|}}M_{n-|\eta|}^{(0,\theta)}(\rho)p_{(0,\theta)}^{\uparrow}(\rho,(\rho',\gamma_1))p^{\downarrow}(\eta,\gamma_2^k)\\
&+s_{w,w-1}\sum\limits_{\rho \in C_{n-|\eta|}}M_{n-|\eta|}^{(0,\theta)}(\rho)p_{(0,\theta)}^{\uparrow}(\rho,\rho')p^{\downarrow}(\eta,\gamma)\\
&+s_{w,w}^{(1)}\sum\limits_{\rho \in C_{n-|\eta|}}M_{n-|\eta|}^{(0,\theta)}(\rho)\mathbbm{1}(\eta=\gamma)T^{(0,\theta)}_{|\rho|}(\rho,\rho')\\
&+s_{w,w+1}\sum\limits_{\rho \in C_{n-|\eta|}}M_{n-|\eta|}^{(0,\theta)}(\rho)p_{(0,0)}^{\uparrow}(\eta,\gamma)p^{\downarrow}(\rho,\rho')
\end{align*}

Using the properties $M_{N}^{(0,\theta)}p_{(0,\theta)}^{\uparrow}=M_{N+1}^{(0,\theta)}$ and $M_{|\rho'|+\gamma_1}^{(0,\theta)}(\rho',\gamma_1) = M_{|\rho'|}^{(0,\theta)}(\rho')\upsilon_{|\rho'|+\gamma_1}^{(0,\theta)}(\gamma_1)$, we can evaluate the sums and factor out $M_{n-|\gamma|}^{(0,\theta)}(\rho')$:
\begin{align*}
    \Lambda_{n,k}T_n^{(0,\theta)}(\eta,(\rho',\gamma)) &= M_{n-|\gamma|}^{(0,\theta)}(\rho') \bigg[ s_{w,w}^{(2)}\upsilon_{n-|\gamma|+\gamma_1}^{(0,\theta)}(\gamma_1)T_{|\eta|}^{(0,0)}(\eta,\gamma_2^k) + s_{w,w}^{(2)}T_{|\eta|}^{(0,0)}(\eta,\gamma)\\
    &+ s_{w,w-1}\upsilon_{n-|\gamma|+\gamma_1}^{(0,\theta)}(\gamma_1)p^{\downarrow}(\eta,\gamma_2^k) + s_{w,w-1}p^{\downarrow}(\eta,\gamma)\\
    &+ s_{w,w}^{(1)}\mathbbm{1}(\eta=\gamma) + s_{w,w+1}p_{(0,0)}^{\uparrow}(\eta,\gamma) \bigg]
\end{align*}

Factoring out the definition of $Q_{n,k}^{(0,\theta)}(\eta,\gamma)$ from the bracketed expression yields:
\begin{align*}
    \Lambda_{n,k}T_n^{(0,\theta)}(\eta,(\rho',\gamma))&=Q_{n,k}^{(0,\theta)}(\eta,\gamma)M_{n-|\gamma|}^{(0,\theta)}(\rho')\\
    &=Q_{n,k}^{(0,\theta)}(\eta,\gamma)\Lambda_{n,k}(\gamma,(\rho',\gamma))\\
    &=Q_{n,k}^{(0,\theta)}\Lambda_{n,k}(\eta,(\rho',\gamma)).
\end{align*}
\end{proof}

\section{The Diffusive Limit and Generator}

Define $\alpha_n = (n+\theta)(n+1)$ and the step-process $Z^n(t)$, as:
\begin{equation} \label{eq:process_scaling}
Z^n(t) = \frac{1}{n} Z_n^{(0,\theta)}(\lfloor \alpha_n t \rfloor), \quad t \ge 0.
\end{equation} 

In this section we show that $(Z^n(t))_{t\geq 0}$ converges to a diffusion on $[0,1]$ that jumps in from the boundary at $0$ and reflects from the boundary at $1$ when the boundary at $1$ is accessible.  In order to do this, we need to isolate the jumping in behavior of $Z_n^{(0,\theta)}$ from $1$.  This is somewhat complicated by the fact that there are two types transitions of $Z_n^{(0,\theta)}$ at $1$.  The first type, which is most common, comes from when the oldest allele does not go extinct and the second, rarer, type comes from when it does go extinct, resulting in a jump of $Z_n^{(0,\theta)}$ to the number of individuals with what had been the second oldest allele.

Specifically, we introduce the chain $\tilde Z_n^{(0,\theta)}$ on $\{0,1,\dots, n\}$ with transition matrix $\tilde Q_n^{(0,\theta)}$ defined by 
\begin{equation}\label{eqtilde} \tilde Q_n^{(0,\theta)}(i,j) = r_{i,j} \ \ \textrm{for} \ \  i\geq 1, j\geq 0\quad \textrm{and}\quad \tilde Q_n^{(0,\theta)}(0,j) = \upsilon_n^{(0,\theta)}(j).\end{equation}

There pathwise relationship between $\tilde Z_n^{(0,\theta)}$ and $Z_n^{(0,\theta)}$, in that $Z_n^{(0,\theta)}$ is $\tilde Z_n^{(0,\theta)}$ watched on $\{1,\dots, n\}$.  This is formalized in the text theorem.

\begin{thm}\label{thm:censored}
Let $Z_n^{(0,\theta)}$ and $\tilde Z_n^{(0,\theta)}$ start from $i\in \{1,\dots, n\}$.  Define $\beta^n_0=0$ and 
\[\beta^n_i = \inf\{k\geq \beta^n_{i-1} : \tilde Z^{(0,\theta)}(k) \in \{1,\dots,n \}\}\]
for $i\geq 1$.  Then $Z_n^{(0,\theta)}=_d (\tilde Z_n^{(0,\theta)}(\beta^n_k))_{k\geq 0}$.
\end{thm}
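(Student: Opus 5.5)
The plan is the standard ``chain watched on a subset'' argument. Write $A=\{1,\dots,n\}$. First I will fix a small point in the definition of the $\beta^n_i$: it should be read with a strict inequality, $\beta^n_i=\inf\{k>\beta^n_{i-1}:\tilde Z_n^{(0,\theta)}(k)\in A\}$. With $\geq$ every $\beta^n_i$ would equal $0$ when the chain starts in $A$. I will also check that all these stopping times are a.s.\ finite.

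\textbf{Step 1: basic facts about $\tilde Q_n^{(0,\theta)}$.} I will verify from the formulas for the $r_{i,j}$ that
\[
r_{i,i-1}+r_{i,i}+r_{i,i+1}=1 .
\]
This holds because $(n+\theta-i)i+(n+\theta-i)(n+1-i)+(i+1)i+(n-i)i=(n+\theta)(n+1)$ after expanding. I will also note that $r_{n,n+1}=0$, so the chain never leaves $\{0,\dots,n\}$. The only transition from $A$ to $0$ is $1\to0$, which has probability $r_{1,0}$. Finally, $\upsilon_n^{(0,\theta)}$ is a law on $[n]$, since it is the law of a last column size, which is at least $1$; hence $\tilde Q_n^{(0,\theta)}(0,0)=0$. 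It follows that every visit to $0$ lasts exactly one step, and the chain returns to $A$ at the next time. So $\beta^n_{i}-\beta^n_{i-1}\in\{1,2\}$, and in particular all $\beta^n_i<\infty$.

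\textbf{Step 2: Markov property of the watched process.} The times $\beta^n_k$ are stopping times for the natural filtration of $\tilde Z_n^{(0,\theta)}$. By the strong Markov property at $\beta^n_k$, conditionally on $\mathcal F_{\beta^n_k}$ and on $\tilde Z_n^{(0,\theta)}(\beta^n_k)=i$, the post-$\beta^n_k$ process is a fresh copy of the chain started at $i$. Therefore $W_k:=\tilde Z_n^{(0,\theta)}(\beta^n_k)$ is a time-homogeneous Markov chain. Its kernel is
\[
P(i,j)=\mathbb P_i\bigl(\tilde Z_n^{(0,\theta)}(\beta^n_1)=j\bigr).
\]

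\textbf{Step 3: computing $P$.} I will decompose on the first step. Either $\tilde Z_n^{(0,\theta)}(1)=j\in A$, which happens with probability $r_{i,j}$. Or $\tilde Z_n^{(0,\theta)}(1)=0$, which is possible only when $i=1$ and has probability $r_{1,0}$; in that case Step 1 gives $\tilde Z_n^{(0,\theta)}(2)=j$ with probability $\upsilon_n^{(0,\theta)}(j)$. Hence
\[
P(i,j)=r_{i,j}+r_{1,0}\mathbbm{1}(i=1)\upsilon_n^{(0,\theta)}(j),
\]
which is exactly the kernel $Q_n^{(0,\theta)}$ of Proposition~\ref{prop:rightmost}. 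Both processes start at $i$ and are Markov with the same kernel, so their finite-dimensional distributions agree. This gives equality in law as processes indexed by $k\geq0$.

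\textbf{Main obstacle.} The only delicate point is the hypothesis under which $Z_n^{(0,\theta)}$ is Markov with kernel $Q_n^{(0,\theta)}$. Proposition~\ref{prop:rightmost} guarantees this only when $X_n^{(0,\theta)}$ starts from a law of the form $\mu\Lambda_n$. I will therefore state the theorem with $X_n^{(0,\theta)}$ started from $\delta_i\Lambda_n$, i.e.\ $Z_n^{(0,\theta)}$ started at $i$ with the remaining columns distributed as $M_{n-i}^{(0,\theta)}$. Under that convention the argument above is complete.
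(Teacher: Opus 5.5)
Your proposal is correct and follows the same route as the paper. The paper cites the standard watched-chain construction (Norris, Example 1.4.4) for the Markov property of $(\tilde Z_n^{(0,\theta)}(\beta^n_k))_{k\geq 0}$ and identifies its kernel with $Q_n^{(0,\theta)}$ using the definition of $\tilde Q_n^{(0,\theta)}$ and Proposition~\ref{prop:rightmost}, which is what your Steps 2 and 3 spell out. Your additions are sound and fill in points the paper leaves implicit: reading $\beta^n_i$ with a strict inequality, noting $\tilde Q_n^{(0,\theta)}(0,0)=0$, and requiring $X_n^{(0,\theta)}$ to start from $\delta_i\Lambda_n$.
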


\begin{proof}
This is a standard construction, see e.g.\! \cite[Example 1.4.4]{Norris_1997} to see that $(\tilde Z_n^{(0,\theta)}(\beta^n_k))_{k\geq 0}$ is a Markov chain and the identification of its transition matrix with that of $Z_n^{(0,\theta)}$ follows from the definition of $\tilde Q_n^{(0,\theta)}$ and Proposition \ref{prop:rightmost}.
\end{proof}

We will first establish a scaling limit for
\begin{equation} \label{eq:cprocess_scaling}
\tilde Z^n(t) = \frac{1}{n} \tilde Z_n^{(0,\theta)}(\lfloor \alpha_n t \rfloor), \quad t \ge 0.
\end{equation}
and then use this coupling to transfer the limit to $Z^n$ by controlling the sequence $\beta^n_i$.  We define

Our first step is to derive the limiting jumping-in distribution from the boundary.  Next, we establish tight analytical bounds on the hitting times at the absorbing boundary using our modified chain.  Finally we concatenate these sample paths to prove global continuity and identify the limiting generator.

\subsection{Jumping-In Distribution}

\begin{prop} \label{prop:jump_in_dist}
If $\tilde Z^n(0)=0$ then $\tilde Z^n(1/\alpha_n) \Rightarrow V$ where the density of $V$ on $(0,1)$ is given by 
\begin{equation} \label{eq:jump_in_limit}
p(x) = \theta(1 - x)^{\theta - 1}.
\end{equation}
\end{prop}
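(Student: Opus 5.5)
The plan is to reduce the statement to an explicit computation of $\upsilon_n^{(0,\theta)}$ followed by a local limit theorem. By \eqref{eqtilde}, from state $0$ the chain $\tilde Z_n^{(0,\theta)}$ jumps in one step to $j$ with probability $\upsilon_n^{(0,\theta)}(j)$. Since $\lfloor \alpha_n\cdot(1/\alpha_n)\rfloor=1$, the law of $\tilde Z^n(1/\alpha_n)$ is exactly the push-forward of $\upsilon_n^{(0,\theta)}$ under $j\mapsto j/n$. So everything comes down to identifying $\upsilon_n^{(0,\theta)}$ and showing that $\frac1n\cdot(\text{a }\upsilon_n^{(0,\theta)}\text{-distributed variable})\Rightarrow V$.

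\textbf{Step 1: identify $\upsilon_n^{(0,\theta)}$.} By \eqref{eq:2}, $M_n^{(0,\theta)}=M_{n-1}^{(0,\theta)}p^\uparrow_{(0,\theta)}$, and $M_0^{(0,\theta)}=\delta_\varnothing$. Hence $M_n^{(0,\theta)}$ is the law after $n$ growth steps of the $(0,\theta)$ ordered Chinese Restaurant Process started from the empty restaurant.

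\begin{itemize}
\item Since new tables are always inserted on the left, the rightmost block is the table opened by the first customer.
\item Every later customer $m+1$ joins this table with probability (current size)$/(m+\theta)$ and otherwise does not. This is a P\'olya urn started with one ball of weight $1$ against total mass $\theta$.
\item Therefore $\psi(X)-1$ under $M_n^{(0,\theta)}$ is beta-binomial with parameters $(n-1;1,\theta)$. Explicitly,
\[
\upsilon_n^{(0,\theta)}(j)=\theta\binom{n-1}{j-1}\frac{\Gamma(j)\,\Gamma(n-j+\theta)}{\Gamma(n+\theta)}=\theta\,\frac{(n-1)!}{(n-j)!}\,\frac{\Gamma(n-j+\theta)}{\Gamma(n+\theta)},\qquad 1\le j\le n.
\]
\end{itemize}

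As a sanity check, one can verify directly that this formula together with \eqref{eq:1} is consistent with \eqref{eq:2}. Alternatively, verify $\sum_j\upsilon_n^{(0,\theta)}(j)=1$ by the beta integral.

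\textbf{Step 2: local limit.} For $x\in(0,1)$ and $j=\lfloor nx\rfloor$, apply $\Gamma(m+a)/\Gamma(m+b)\sim m^{a-b}$ to the two Gamma ratios. This gives
\[
n\,\upsilon_n^{(0,\theta)}(\lfloor nx\rfloor)\to \theta(1-x)^{\theta-1},
\]
uniformly on compact subsets of $(0,1)$. Define the step density $f_n(x)=n\,\upsilon_n^{(0,\theta)}(\lceil nx\rceil)$ on $(0,1]$; it is the density of $U_n=(J_n-W)/n$ with $W$ uniform on $[0,1)$. Then $f_n\to p$ pointwise on $(0,1)$, and since both $f_n$ and $p$ are probability densities, Scheff\'e's lemma gives $U_n\to V$ in total variation. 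Finally, $|U_n-J_n/n|\le 1/n$, so $J_n/n\Rightarrow V$, which is the claim.

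\textbf{Main obstacle.} The real content is Step 1: justifying that under $M_n^{(0,\theta)}$ the rightmost block has the P\'olya-urn law. The cleanest route is the one above, iterating \eqref{eq:2} from $M_0^{(0,\theta)}=\delta_\varnothing$ and tracking only the first table. If that identification feels too informal, I would instead prove the displayed formula by induction on $n$. Using $M_n^{(0,\theta)}=M_{n-1}^{(0,\theta)}p^\uparrow_{(0,\theta)}$ and the form of $p^\uparrow_{(0,\theta)}$, the rightmost block grows from $i$ to $i+1$ with probability $i/(n-1+\theta)$, which yields the recursion
\[
\upsilon_n^{(0,\theta)}(j)=\upsilon_{n-1}^{(0,\theta)}(j)\,\frac{n-1+\theta-j}{n-1+\theta}+\upsilon_{n-1}^{(0,\theta)}(j-1)\,\frac{j-1}{n-1+\theta}.
\]
The closed form satisfies this recursion, with base case $\upsilon_1^{(0,\theta)}(1)=1$. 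The boundary regions $x$ near $0$ or $1$ need no separate control, because Scheff\'e's lemma only requires pointwise convergence.
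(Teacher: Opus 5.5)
Your proposal is correct and follows the paper's proof. The paper also reduces to the one-step law $\upsilon_n^{(0,\theta)}$, uses the same closed form, applies the same Gamma-ratio asymptotics to get $n\upsilon_n^{(0,\theta)}(k_n)\to\theta(1-x)^{\theta-1}$, and concludes with the lattice local limit theorem \cite[Theorem 3.3]{billingsley2013convergence}, which is exactly the Scheff\'e step-density argument you carry out by hand. The only difference is that the paper cites \cite{GPRegen} for the formula for $\upsilon_n^{(0,\theta)}$, while your P\'olya-urn (beta-binomial) derivation via \eqref{eq:2}, or the induction on the recursion, makes that step self-contained.
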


\begin{proof}
Consider the inclusion mapping $\iota_n:\{0, 1, \dots, n\} \hookrightarrow [0, 1]$ by $\iota_n(k) = \frac{k}{n}$. This maps our discrete state space into a uniform lattice on $[0,1]$ where each state has a spatial cell volume of $v_n = \frac{1}{n}$. 

For any target point $x \in (0,1)$, we consider a sequence of discrete states $k_n$ such that their image under the inclusion map converges: $x_n = \iota_n(k_n) \to x$ as $n \to \infty$. 

From \cite[Theorem 3.3]{billingsley2013convergence}, it is sufficient to show that
\[ n \mathbb{P}_0(\tilde Z^n(1/\alpha_n)=x_n) = n \tilde Q_n^{(0,\theta)}(0, k_n)=n\nu_n^{(0,\theta)}(k_n) \to \theta(1 - x)^{\theta - 1}.\]
From \cite{GPRegen}, we know that 
\[
\nu_n^{(0,\theta)}(k_n)=  \binom{n}{k_n} \frac{k_n! \, \theta}{n(n - k_n + \theta)_{k_n}}.
\]
Consequently, expanding the binomial coefficient $\binom{n}{k_n} = \frac{(n)_{k_n}}{k_n!}$ and canceling out some terms,
\[
n\nu_n^{(0,\theta)}(k_n) = \theta \left[ n \cdot \frac{(n)_{k_n}}{k_n!} \right] \frac{k_n!}{n(n - k_n + \theta)_{k_n}} = \theta \frac{(n)_{k_n}}{(n - k_n + \theta)_{k_n}}.
\]
Using the ratio of Gamma functions to represent the falling and rising factorials:
\[
\frac{(n)_{k_n}}{(n - k_n + \theta)_{k_n}} = \frac{\Gamma(n+1)}{\Gamma(n-k_n+1)} \frac{\Gamma(n-k_n+\theta)}{\Gamma(n+\theta)}.
\]
By the standard asymptotic property $\frac{\Gamma(z+a)}{\Gamma(z+b)} \sim z^{a-b}$ as $z \to \infty$, and recalling that our inclusion map implies $n-k_n = n(1-x_n)$, we observe that as $n \to \infty$:
\[
\frac{\Gamma(n+1)}{\Gamma(n+\theta)} \sim n^{1-\theta} \quad \text{and} \quad \frac{\Gamma(n-k_n+\theta)}{\Gamma(n-k_n+1)} \sim (n-k_n)^{\theta-1}.
\]
Combining these limits, we obtain our final density:
\[\begin{split}
n \mathbb{P}_0(\tilde Z^n(1/\alpha_n)=x_n)  = n\nu_n^{(0,\theta)}(k_n) & \sim 1 \cdot \theta \cdot n^{1-\theta} \cdot (n(1-x_n))^{\theta-1}\\
& = \theta n^{1-\theta} n^{\theta-1} (1-x_n)^{\theta-1} \\
& = \theta (1-x_n)^{\theta-1}.\end{split}
\]
As $x_n \to x$, the density converges to $p(x) = \theta(1-x)^{\theta-1}$, which completes the proof.
\end{proof}

\subsection{Sample Path and Hitting Times Convergence}
Our approach to the diffusive limit first requires establishing the convergence of $\tilde Z^n$ stopped at the first time it hits zero jointly with the time at which it first hits zero.  For this, we follow the approach in \cite[Chapter 10]{ethier2009markov}.  

Let $\hat Z$ be a diffusion on $[0,1]$ with generator  
\[ \hat Gf(x) = x(1-x)f''(x) - \theta x f'(x) \]
with the set of polynomials being a core of the domain of $\hat G$.  See \cite[Theorem 8.2.8]{ethier2009markov} for the proof that $\hat Z$ is well defined.  As noted in \cite{ethier2009markov}, the generator can also be though of as the closure of $\{(f,Gf): f\in C^2([0,1])\}$.  Let $\tilde \tau_n = \inf\{ t: \tilde Z^n(t)=0\}$ and $\tau = \inf\{ t: \hat Z(t)=0\}$.

\begin{thm} \label{thm:hitting_time_convergence}
If $\tilde{Z}^n(0) \Rightarrow \hat Z(0)$, then $(\tilde{Z}^n, \tilde{\tau}_n) \Rightarrow (\hat Z, \tau)$ in $D_{[0,1]}[0, \infty) \times [0, \infty]$.
\end{thm}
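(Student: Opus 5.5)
The plan is to split the statement in two: first show that the processes $\tilde Z^n$ converge to $\hat Z$, and then show that the first hitting time of $0$ is almost surely a continuity point of the path. For the first part I would use the generator convergence theorem \cite[Theorem 1.6.5 / Theorem 4.2.6]{ethier2009markov}, via its discrete-time version \cite[Theorem 4.2.6]{ethier2009markov}. Since $0$ is absorbing for $\hat Z$ (indeed $\hat G f(0)=0$), I would work with $\tilde Z^n$ stopped at $\tilde\tau_n$. Equivalently, I would replace $\tilde Q_n^{(0,\theta)}(0,\cdot)$ by $\delta_0$ and call the result $\hat Z^n$. This is harmless for the joint law of $(\tilde Z^n(\cdot\wedge\tilde\tau_n),\tilde\tau_n)$.

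For the generator, take $f$ a polynomial and $x=i/n$. By \eqref{eqtilde} and the formulas for $r_{i,i\pm1}$,
\[
\alpha_n\bigl(\E_{i}[f(\hat Z^n(1/\alpha_n))]-f(i/n)\bigr)=(n-i)i\bigl(f(\tfrac{i+1}{n})-f(\tfrac in)\bigr)+(n+\theta-i)i\bigl(f(\tfrac{i-1}{n})-f(\tfrac in)\bigr).
\]
A Taylor expansion gives drift $\frac{1}{n}[(n-i)i-(n+\theta-i)i]f'=-\theta x f'(x)$ and diffusion $\frac{1}{2n^2}[(n-i)i+(n+\theta-i)i]f''=x(1-x)f''+O(1/n)$. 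The error is uniform in $i$ because $f''',f''$ are bounded. This gives $\sup_i|\alpha_n(T_n-I)f(i/n)-\hat Gf(i/n)|\to0$ for every $f$ in the core. Note the $i=0$ term vanishes for the stopped chain, as does $\hat G f(0)$. Together with $\tilde Z^n(0)\Rightarrow\hat Z(0)$, this yields $\hat Z^n\Rightarrow\hat Z$ in $D_{[0,1]}[0,\infty)$, where $\hat Z$ is absorbed at $0$ after $\tau$.

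For the joint convergence with hitting times I would follow the continuous-mapping argument of \cite[Chapter 10]{ethier2009markov}. The map $\omega\mapsto\tau_0(\omega)=\inf\{t:\omega(t)=0\}$ is lower semicontinuous on $D$ but not continuous in general. It is continuous at paths $\omega$ that are continuous and satisfy the following: for every $\varepsilon>0$, $\omega$ reaches $0$, i.e.\ enters $(-\infty,0]$, after $\tau_0(\omega)$ in an arbitrarily short time. Equivalently, $\inf_{t\le\tau_0+\varepsilon}\omega(t)<0$ would be required, which fails at a boundary. So I would instead use the modified functional $\tau^\delta(\omega)=\inf\{t:\omega(t)\le\delta\}$. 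It suffices to show two things: $\tau^\delta\to\tau$ a.s.\ as $\delta\downarrow0$ for $\hat Z$, and $\limsup_{n}\P(\tilde\tau_n-\tilde\tau_n^\delta>\eta)\to0$ as $\delta\to0$ for each $\eta>0$. For almost every $\delta$, $\tau^\delta$ is a.s.\ a continuity point for the continuous diffusion $\hat Z$, because the process is non-degenerate at level $\delta>0$ and crosses it immediately. Skorokhod representation then gives $(\tilde Z^n,\tilde\tau_n^\delta)\Rightarrow(\hat Z,\tau^\delta)$, and a standard approximation argument concludes. On $\{\tau=\infty\}$ the claim reduces to $\tilde\tau_n\to\infty$ in probability, which follows from the lower semicontinuity.

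The main obstacle is the uniform estimate near $0$: showing that once $\tilde Z^n\le\delta$, it hits $0$ within time $\eta$ with probability close to one, uniformly in $n$. I would handle it with a Lyapunov-type function for the chain. Near $0$ the chain is approximately a critical birth–death process with rates $\approx i$ up and $\approx i(1+\theta/n)$ down, per $\alpha_n$ time scaling. I would take $g(x)=\sqrt{x}$ (or $x^{1/2}$ regularized) and check that $\alpha_n(T_n-I)g\le -c/\sqrt{x}$ on $\{x\le 2\delta\}$, including lattice corrections for small $i$. Optional stopping then bounds the expected time to exit $[0,2\delta]$ through $0$ by $C\sqrt\delta$. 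The probability of exiting through $2\delta$ is at most $\approx1/2$ by the martingale property of $x$ up to drift, and repeated attempts give geometric decay. Controlling this for small $i$, where the Taylor expansion is poor, is where I expect the care to go; there I would check the inequality directly from the explicit rates $r_{i,i\pm1}$.
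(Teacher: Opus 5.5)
Your architecture is the same as the paper's: generator convergence for the chain stopped at $0$, continuity of $\tau^\delta$ at $\hat Z$-almost every path, and the approximation argument of \cite[Problem 3.5]{ethier2009markov}, which reduces everything to $\lim_{\delta\to0}\limsup_n\P(\tilde\tau_n-\tilde\tau_n^\delta>\eta)=0$. The real difference is the Lyapunov function used for this last estimate.

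The paper takes $g_0$, the expected hitting time of $0$ for $\hat Z$. This requires the derivative bounds of Corollary~\ref{cor:g0_bounds}, a Taylor-remainder estimate on $\{x\ge m/n\}$, and a separate concavity/Jensen argument at the lattice points $m/n$ with $m$ small. It yields $\E_x[\tilde\tau_n]\le\kappa g_0(x)$ for all $x$, and the proof then closes with the strong Markov property at $\tilde\tau_n^\delta$ and Markov's inequality.

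Your choice $\sqrt{x}$ is more economical, because the discrete generator can be computed exactly:
\[
\alpha_n\Bigl(\E_{i}\bigl[\sqrt{\hat Z^n(1/\alpha_n)}\bigr]-\sqrt{i/n}\Bigr)=\frac{i}{\sqrt n}\Bigl[(n-i)\bigl(\sqrt{i+1}-2\sqrt i+\sqrt{i-1}\bigr)-\theta\bigl(\sqrt i-\sqrt{i-1}\bigr)\Bigr].
\]
Both terms are nonpositive for every $i\ge1$. The second difference of $\sqrt{\cdot}$ at $i$ is at most $-\tfrac14(i+1)^{-3/2}$, so the first term is at most $-c(1-x)/\sqrt{x}$. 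The second term is at most $-\tfrac{\theta}{2}\sqrt{x}$. Consequently no small-$i$ lattice corrections are needed at all. In fact the expression is at most $-c_*<0$ on all of $K_n\setminus\{0\}$, uniformly in $n$.

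The step that does not work as written is the final assembly via ``repeated attempts give geometric decay.'' After an exit through $2\delta$, the chain must come back down to level $\delta$ before the next attempt starts. None of your estimates bounds the duration of that return trip; for $\hat Z$ it is an order-one time. So a geometric bound on the number of failed attempts does not give $\P(\tilde\tau_n-\tilde\tau_n^\delta>\eta)\to0$.

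The fix is immediate with your own tools. The global bound above gives $\E_x[\tilde\tau_n]\le\sqrt{x}/c_*$ for all $x$ and $n$. Hence $\P(\tilde\tau_n-\tilde\tau_n^\delta>\eta)\le\sqrt{\delta}/(c_*\eta)$, which is exactly the paper's argument with $\sqrt{x}$ in place of $g_0$.

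Alternatively, replace $[0,2\delta]$ by a fixed window $[0,y]$ and make a single attempt. The supermartingale bound gives exit probability through the top at most $\delta/y$. The generator bound $\le -c/(2\sqrt{y})$ on $(0,y]$ makes the expected exit time $O(\sqrt{y\delta})$. Therefore $\P(\tilde\tau_n-\tilde\tau_n^\delta>\eta)\le\delta/y+O(\sqrt{y\delta})/\eta$, which tends to $0$ as $\delta\to0$ with $y$ fixed.
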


In order to prove this, we first show that $\tilde{Z}^n\Rightarrow \hat Z$ and then address the joint convergence with the hitting time of $0$.

\begin{thm} 
If $Z^n(0) \Rightarrow Z(0)$ in $[0,1]$, then $\tilde Z^n (\cdot \wedge \tilde \tau_n) \Rightarrow \hat Z$ in the Skorohod space $D_{[0,1]}[0,\infty)$.  
\end{thm}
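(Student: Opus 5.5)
The plan is to realize the stopped process $\tilde Z^n(\cdot\wedge\tilde\tau_n)$ as a time-rescaled discrete-time Markov chain, and then apply the generator convergence criterion for discrete-time chains, \cite[Theorem 4.2.6]{ethier2009markov}, with the polynomials as a core for $\hat G$.

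The first step identifies the stopped chain. Stopping $\tilde Z_n^{(0,\theta)}$ at its first visit to $0$ gives the Markov chain on $\{0,1,\dots,n\}$ with transition matrix
\[
P_n(i,j)=r_{i,j}\ \ (i\geq 1), \qquad P_n(0,j)=\mathbbm{1}(j=0).
\]
So $\tilde Z^n(\cdot\wedge\tilde\tau_n)$ is this chain viewed on the lattice $\iota_n(\{0,\dots,n\})$ and run at speed $\alpha_n$. The limit is consistent with this, because both coefficients of $\hat G$ vanish at $x=0$. Hence $\hat Gf(0)=0$ and $0$ is absorbing for $\hat Z$, so $\hat Z$ is the natural candidate limit of the stopped process. (I read the hypothesis as $\tilde Z^n(0)\Rightarrow\hat Z(0)$.)

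The second step is the generator computation. For a polynomial $f$ I will show
\[
\sup_{0\le i\le n}\Bigl|\alpha_n\bigl(P_nf_n-f_n\bigr)(i)-\hat Gf(i/n)\Bigr|\to 0,\qquad f_n(i)=f(i/n).
\]
At $i=0$ both sides vanish. For $i\geq1$, write $x=i/n$ and Taylor expand $f((i\pm1)/n)$ to third order:
\[
\alpha_n(P_nf_n-f_n)(i)=\frac{\alpha_n}{n}\bigl(r_{i,i+1}-r_{i,i-1}\bigr)f'(x)+\frac{\alpha_n}{2n^2}\bigl(r_{i,i+1}+r_{i,i-1}\bigr)f''(x)+R_n .
\]
From the explicit formulas, $\alpha_n(r_{i,i+1}-r_{i,i-1})/n=\bigl((n-i)i-(n+\theta-i)i\bigr)/n=-\theta x$ exactly. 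Likewise $\alpha_n(r_{i,i+1}+r_{i,i-1})/(2n^2)=i(2n+\theta-2i)/(2n^2)=x(1-x)+\theta x/(2n)$. The remainder satisfies $|R_n|\le \|f'''\|_\infty\,\alpha_n(r_{i,i+1}+r_{i,i-1})/(6n^3)=O(1/n)$ uniformly in $i$. So the error is $O(1/n)$ uniformly on the lattice, including near both boundaries.

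The third step is to invoke the theorem. By \cite[Theorem 8.2.8]{ethier2009markov}, the closure of $\hat G$ on polynomials generates a Feller semigroup on $C[0,1]$, which is the law of $\hat Z$. The uniform convergence above, together with $\tilde Z^n(0)\Rightarrow\hat Z(0)$, then yields $\tilde Z^n(\cdot\wedge\tilde\tau_n)\Rightarrow\hat Z$ in $D_{[0,1]}[0,\infty)$ by \cite[Theorem 4.2.6]{ethier2009markov}. Since the state space $[0,1]$ is compact, no extra compact containment argument is needed.

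The step I expect to be the main obstacle is the first one, checking that stopping really matches an absorbed chain whose generator agrees with $\hat G$ at $0$. The reason is that the unstopped $\tilde Z^n$ jumps macroscopically out of $0$, with jump distribution $\upsilon_n^{(0,\theta)}$. This is why the statement is made for the stopped process. Once row $0$ is made absorbing, the mismatch disappears and the remaining estimates are routine.
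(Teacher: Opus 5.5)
Your proposal is correct and is essentially the paper's argument. The stopped process is the chain absorbed at $0$, so both generators vanish there. The exact first and second moments give $-\theta x$ and $x(1-x)+\theta x/(2n)$, the third-order Taylor remainder is $O(1/n)$ uniformly on the lattice, and generator convergence on the polynomial core of $\hat G$ finishes the proof. The only difference is cosmetic: you cite the discrete-time chain criterion \cite[Theorem 4.2.6]{ethier2009markov} directly, while the paper appeals to \cite[Theorem 1.6.5]{ethier2009markov} within the framework of Chapter 10.
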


\begin{proof}
To establish the weak convergence $Z^n \Rightarrow Z$, we rely on the convergence of the strongly continuous semigroups associated with the discrete processes to the semigroup of the limiting diffusion. Let $K_n=\{0,1/n,2/n,\dots,1\}$.  By \cite[Theorem 1.6.5]{ethier2009markov}, and following the framework of Chapter 10, Theorem 1.1, it suffices to show that the discrete generators $\tilde G_n$ defined by
\[ \tilde G_n f(x) = \alpha_n \mathbb{E}_x\left[f(\tilde Z^n(\alpha_n^{-1}\wedge \tilde \tau_n) - f(x)\right]\]
converge uniformly to the continuous generator $\hat G$ for all functions in a core of $\hat G$ in the sense that
\[ \lim_{n\to\infty} \sup_{x\in K_n} |\tilde G_n f(x) - \hat G f(x)| =0.\]
As noted above, polynomials form a core for $G$, but our argument applies equally to $f\in C^3([0,1])$.  Fix $f\in C^3([0,1])$.  Note also that $\tilde G_n f(0)=\hat G f(0)=0$, so we need only consider $x\neq 0$.  Let $\Delta \tilde Z^n = \tilde Z^n(\alpha_n^{-1})-\tilde Z^n(0)$.  Applying a third-order Taylor expansion to $f$ around $x\in K_n$, we have:
\begin{equation}\label{eqconv1} \mathbb{E}_x\left[f(\tilde Z^n(\alpha_n^{-1}\wedge \tilde \tau_n)- f(x)\right] = f'(x)\E_x (\Delta \tilde Z^n) + \frac{1}{2}f''(x)\E_x\left[(\Delta \tilde Z^n)^2\right] + \E_x R_f(x, \Delta \tilde Z^n), \end{equation}
where the remainder term satisfies $|R_f(x,\delta )| \le \frac{1}{6}\|f'''\|_\infty |\delta |^3$. 

Using the transition probabilities from Equation \eqref{eqtilde}, we calculate the first moment for a state $x \in K_n\setminus \{0\}$:
\begin{align}\label{eqm1}
\mathbb{E}_x[\Delta \tilde Z^n] &= \frac{1}{n}\left(\frac{(n-xn)xn}{(n+\theta)(n+1)}\right) - \frac{1}{n}\left(\frac{(n+\theta-xn)xn}{(n+\theta)(n+1)}\right) \\
&= \frac{-x\theta}{(n+\theta)(n+1)} = \frac{-x\theta}{\alpha_n}.
\end{align}
Similarly, we calculate the second moment:
\begin{align}\label{eqm2}
\mathbb{E}_x[(\Delta \tilde Z^n)^2] &= \frac{1}{n^2}\left(\frac{xn(2n-2xn+\theta)}{(n+\theta)(n+1)}\right) = \frac{x(2(1-x) + \theta/n)}{\alpha_n}.
\end{align}
Plugging these into Equation \eqref{eqconv1} we see that
\begin{align*} 
\tilde G_n f(x) &= \alpha_n \mathbb{E}_x\left[f(\tilde Z^n(\alpha_n^{-1}\wedge \tilde \tau_n)- f(x)\right] \\
&= f'(x)(-\theta x) + \frac{1}{2}f''(x)\left[2x(1-x) + \frac{\theta x}{n}\right] + \alpha_n \mathbb{E}_x[R_f(x, \Delta \tilde Z^n)] \\
&= \left[ x(1-x)f''(x) - \theta x f'(x) \right] + \frac{\theta x}{2n}f''(x) + \alpha_n \mathbb{E}_x[R_f(x, \Delta \tilde Z^n)] \\
&= \hat Gf(x) + \frac{\theta x}{2n}f''(x) + \alpha_n \mathbb{E}_x[R_f(x, \Delta \tilde Z^n)].
\end{align*}

It remains to show that the second two terms on the right-hand-side vanish uniformly over $x \in K_n$ as $n \to \infty$. For the second derivative error term, we have:
\[ \sup_{x \in K_n} \left| \frac{\theta x}{2n} f''(x) \right| \le \frac{\theta}{2n} \|f''\|_\infty, \]
which converges uniformly to 0. 

For the Taylor remainder, recall that the process only steps to adjacent states, meaning the maximum jump size is bounded by $|\Delta \tilde Z^n| \le 1/n$. Consequently, $|\Delta \tilde Z^n|^3 \le \frac{1}{n^3}$. As a result,  
\begin{align*} 
\sup_{x \in K_n} |\alpha_n \mathbb{E}_x[R_f(x, \Delta \tilde Z^n)]| &\le \frac{1}{6}\|f'''\|_\infty \sup_{x \in K_n} \alpha_n \mathbb{E}_x[|\Delta \tilde Z^n|^3] \\
&\le \frac{\alpha_n}{6n^3}\|f'''\|_\infty  \\
&= \frac{(n+1)(n+\theta)}{6n^3}\|f'''\|_\infty\\
& \to 0,
\end{align*}
with the convergence being uniform on $[0,1]$.
\end{proof}

We now by characterize the expected hitting time $0$ of the limiting diffusion.  Let 
\[ \tau = \inf\{t \ge 0 : \hat Z(t) = 0\}, \] 
denote the first hitting time $0$ of $\hat Z$.

\begin{prop} \label{prop:expected_hitting_time}
Let $g_0$ be the unique $C([0,1]) \cap C^2((0,1])$ solution of the differential equation $\hat Gg_0 = -1$ with boundary conditions $g_0(0) = 0$, and $g_0'(1)$ finite. Then $\mathbb{E}_x[\tau] = g_0(x)$ for all $x \in [0,1]$. Consequently, the expected hitting time is explicitly given by:
\[ \mathbb{E}_x[\tau] = \int_0^x (1-u)^{-\theta} \left[ \int_u^1 \frac{(1-y)^{\theta-1}}{y} \,dy \right] du. \]
\end{prop}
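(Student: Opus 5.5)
We solve the ODE $\hat G g_0=-1$ explicitly, show that the solution under the stated boundary conditions is unique, and then identify it with $\mathbb{E}_x[\tau]$ via Dynkin's formula together with a localization argument near the boundary $1$.

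\textbf{Solving the ODE.} Write $x(1-x)g''-\theta x g'=-1$ on $(0,1]$. Dividing by $x$ gives $(1-x)g''-\theta g' = -1/x$. An integrating factor is $(1-x)^{\theta-1}$, since
\[
\frac{d}{dx}\bigl[(1-x)^{\theta}g'(x)\bigr]=(1-x)^{\theta-1}\bigl[(1-x)g''-\theta g'\bigr]=-\frac{(1-x)^{\theta-1}}{x}.
\]
Integrating from $u$ to $1$ gives
\[
(1-u)^{\theta}g'(u)=\lim_{x\to 1}(1-x)^\theta g'(x)+\int_u^1\frac{(1-y)^{\theta-1}}{y}\,dy,
\]
and the integral converges at $y=1$ because $\theta>0$. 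The requirement that $g_0'(1)$ be finite forces $(1-x)^\theta g'(x)\to 0$, so the constant vanishes. Integrating once more with $g_0(0)=0$ yields the displayed formula. Near $u=0$ the integrand behaves like $\log(1/u)$, which is integrable, so $g_0\in C([0,1])\cap C^2((0,1])$.

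We must also check that $g_0'(1)$ is indeed finite. Near $u=1$, $\int_u^1 (1-y)^{\theta-1}/y\,dy\sim (1-u)^\theta/\theta$, so $g_0'(u)\to 1/\theta$.

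\textbf{Uniqueness.} Any other solution differs from $g_0$ by a solution $h$ of the homogeneous equation with $h(0)=0$. Such solutions are spanned by constants and $\int_0^x(1-u)^{-\theta}\,du$. The only combination vanishing at $0$ is $c\int_0^x(1-u)^{-\theta}\,du$, whose derivative $c(1-x)^{-\theta}$ blows up at $1$ unless $c=0$.

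\textbf{Identification with $\mathbb{E}_x[\tau]$.} Since $g_0$ is not $C^2$ at $0$ (its derivative has a $\log$ singularity), we cannot feed it to the generator directly. Fix $\epsilon>0$ and let $\tau_\epsilon=\inf\{t:\hat Z(t)\le\epsilon\}$. Pick $f_\epsilon\in C^2([0,1])$ with $f_\epsilon=g_0$ on $[\epsilon,1]$. This is legitimate because $g_0$ is $C^2$ up to $1$: from the ODE, $g_0''=(\theta x g_0'-1)/(x(1-x))$ has a finite limit at $1$ when $g_0'(1)=1/\theta$, which a short expansion confirms. Since $C^2([0,1])$ lies in the domain of the closure of $\hat G$, Dynkin's formula gives
\[
\mathbb{E}_x[g_0(\hat Z(t\wedge\tau_\epsilon))]=g_0(x)-\mathbb{E}_x[t\wedge\tau_\epsilon].
\]
Letting $t\to\infty$ by monotone convergence, and using boundedness of $g_0$, gives $\mathbb{E}_x[\tau_\epsilon]\le \|g_0\|_\infty<\infty$, hence $\tau_\epsilon<\infty$ a.s. Then
\[
\mathbb{E}_x[\tau_\epsilon]=g_0(x)-g_0(\epsilon).
\]
As $\epsilon\downarrow0$, path continuity gives $\tau_\epsilon\uparrow\tau$, and continuity of $g_0$ at $0$ with $g_0(0)=0$ gives $g_0(\epsilon)\to 0$. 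Monotone convergence then yields $\mathbb{E}_x[\tau]=g_0(x)$.

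\textbf{Main obstacle.} The delicate step is justifying Dynkin's formula when the boundary at $1$ is accessible ($\theta<1$). The argument needs $f_\epsilon$ to lie in the domain of $\hat G$ with $\hat G f_\epsilon=-1$ on $[\epsilon,1]$, including at the point $1$ itself. We rely on the paper's remark that the closure of $\{(f,\hat Gf): f\in C^2([0,1])\}$ is the generator, so verifying $g_0\in C^2$ up to $1$ suffices. At $x=1$ the equation reads $\hat G g_0(1)=-\theta g_0'(1)=-1$, which is consistent with the value $g_0'(1)=1/\theta$ found above.
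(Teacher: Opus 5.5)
Your proof is correct, but it takes a different route from the paper. The paper gives no argument of its own beyond citing \cite[Proposition 10.2.8]{ethier2009markov} under the change of coordinates $x\mapsto 1-x$. You instead derive everything directly:
\begin{itemize}
\item the integrating factor $(1-x)^{\theta-1}$ yields the explicit formula, and finiteness of $g_0'(1)$ kills the free constant;
\item uniqueness follows because the only nonconstant homogeneous solution has derivative $(1-x)^{-\theta}$, which blows up at $1$;
\item the identification $\mathbb{E}_x[\tau]=g_0(x)$ comes from Dynkin's formula applied to a $C^2([0,1])$ modification $f_\epsilon$ of $g_0$ below level $\epsilon$, followed by $\epsilon\downarrow 0$.
\end{itemize}

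The citation buys brevity. Your version buys self-containment, and it makes explicit two points the paper leaves implicit. First, the logarithmic singularity of $g_0'$ at $0$ forces a localization. Second, $g_0$ is genuinely $C^2$ up to $x=1$, with $\hat G g_0(1)=-\theta g_0'(1)=-1$. This is exactly what lets $C^2([0,1])$ test functions handle the accessible boundary when $\theta<1$, and the paper re-derives the needed boundary limits later in Corollary \ref{cor:g0_bounds}.

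Three small tidy-ups:
\begin{itemize}
\item Take $\epsilon<x$; the case $x=0$ is trivial.
\item The bound $\mathbb{E}_x[t\wedge\tau_\epsilon]\le g_0(x)$ uses $g_0\ge 0$, not mere boundedness.
\item Instead of your ``short expansion'' near $1$, substitute $y=1-(1-x)s$. This gives $g_0'(x)=\int_0^1 s^{\theta-1}\bigl(1-(1-x)s\bigr)^{-1}\,ds$. It shows at once that $g_0$ is smooth on $(0,1]$, with $g_0'(1)=1/\theta$ and $g_0''(1)=-1/(\theta+1)$.
\end{itemize}
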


\begin{proof}
This is the same as \cite[Proposition 10.2.8]{ethier2009markov} under the change of coordinates $x\mapsto 1-x$.
\end{proof}

\begin{corollary} \label{cor:g0_bounds}
Let $g_0$ be defined as in Proposition \ref{prop:expected_hitting_time} there exist positive constants $c_1, c_2, c_3$ such that for all $x \in (0, 1)$:
\begin{align}
    0 < g_0'(x) &\le c_1(1 - \ln x), \label{eq:g0_bound1} \\
    |g_0''(x)| &\le c_2 x^{-1}, \label{eq:g0_bound2} \\
    |g_0'''(x)| &\le c_3 x^{-2}. \label{eq:g0_bound3}
\end{align}
Furthermore, there exists a neighborhood $(0, \delta)$ near the absorbing boundary $0$ such that $g_0''(x) < 0$, making $g_0$ strictly concave there.
\end{corollary}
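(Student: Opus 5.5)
Since Proposition \ref{prop:expected_hitting_time} gives $g_0$ in closed form, the plan is to differentiate that formula directly rather than argue abstractly. Set
\[ h(u) = \int_u^1 \frac{(1-y)^{\theta-1}}{y}\,dy, \qquad g_0'(x) = (1-x)^{-\theta} h(x). \]
Positivity of $g_0'$ on $(0,1)$ is immediate, since $h(x)>0$ there. For the upper bound in \eqref{eq:g0_bound1}, split the integral defining $h$ at $y=1/2$. The piece over $[x,1/2]$ is at most $C\int_x^{1/2} dy/y \le C(1+|\ln x|)$. The piece over $[1/2,1]$ is a fixed finite constant, because $\theta>0$ makes $(1-y)^{\theta-1}$ integrable. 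This gives $h(x)\le c(1-\ln x)$.

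The factor $(1-x)^{-\theta}$ blows up at $x=1$, so I treat the region near $1$ separately. For $y\ge 1/2$ we have $h(x)\le 2\int_x^1 (1-y)^{\theta-1}dy = \tfrac{2}{\theta}(1-x)^\theta$. Hence $g_0'(x)\le 2/\theta$ on $[1/2,1)$, and $g_0'$ stays bounded near $1$. Combining the two regions yields \eqref{eq:g0_bound1}.

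For the higher derivatives, I use the ODE instead of differentiating the formula repeatedly. From $\hat G g_0=-1$ we get
\[ g_0''(x) = \frac{\theta x g_0'(x) - 1}{x(1-x)}. \]
Equivalently, differentiating the formula gives $g_0'' = \theta(1-x)^{-\theta-1}h - (1-x)^{-1}x^{-1}$. Near $x=0$ the first term is $O(1-\ln x)$ and the second is of order $-1/x$, which gives \eqref{eq:g0_bound2} on $(0,1/2]$.

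Near $x=1$ the apparent singularity must cancel, and I will check this with the expansion
\[ h(x) = \tfrac{1}{\theta}(1-x)^\theta + O\big((1-x)^{\theta+1}\big), \]
obtained by writing $1/y = 1 + (1-y)/y$. Substituting, $\theta(1-x)^{-\theta-1}h(x) - \frac{1}{x(1-x)}$ becomes $\frac{1}{1-x}\bigl(1 - \tfrac{1}{x}\bigr) + O(1)$, which is bounded. So $g_0''$ is bounded on $[1/2,1)$, and \eqref{eq:g0_bound2} holds on all of $(0,1)$.

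For $g_0'''$, differentiate the ODE once:
\[ x(1-x)g_0''' + (1-2x)g_0'' - \theta g_0' - \theta x g_0'' = 0. \]
Solving for $g_0'''$ and using the previous bounds gives $|g_0'''|\le C x^{-2}$ near $0$. Near $1$ I again expand $h$ to one more order to confirm that the quantities stay bounded.

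Finally, the concavity claim follows from the formula $g_0''(x) = -\frac{1}{x(1-x)} + O(1-\ln x)$ near $0$: the leading term $-1/x$ dominates, so $g_0''<0$ on some interval $(0,\delta)$.

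The main obstacle is the endpoint $x=1$, where the individual terms carry factors $(1-x)^{-1}$ and only the asymptotic expansion of $h$ shows they cancel. This step is delicate when $\theta<1$, since then $(1-y)^{\theta-1}$ is itself singular, and I would handle it by expanding $1/y$ around $y=1$ inside the integral defining $h$.
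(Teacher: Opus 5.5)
Your proposal is correct and follows essentially the same route as the paper. Both arguments use the closed form $g_0'(x)=(1-x)^{-\theta}h(x)$ with a split at $y=1/2$ for the logarithmic bound, then the relation $g_0''=(\theta x g_0'-1)/(x(1-x))$ and its derivative to bound $g_0''$ and $g_0'''$, with concavity coming from the dominant $-1/x$ term near $0$. The only difference is at $x=1$, where you control the apparent singularities by expanding $h$ (iterating it gives $g_0'(x)=\sum_{k\ge 0}(1-x)^k/(k+\theta)$, which is analytic there) instead of using L'H\^opital's rule; this also justifies the boundedness of $g_0'''$ near $1$, which the paper only asserts.
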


\begin{proof}
The first derivative is given by:
\[
g_0'(x) = (1-x)^{-\theta} \int_x^1 y^{-1}(1-y)^{\theta-1} dy = \frac{\int_x^1 y^{-1}(1-y)^{\theta-1} dy}{(1-x)^\theta}.
\]
To establish the strict positivity $0 < g_0'(x)$, observe that for $x \in (0, 1)$ and $\theta > 0$, the integrand $y^{-1}(1-y)^{\theta-1}$ is strictly positive on the integration interval $(x, 1)$. Multiplying this positive integral by the strictly positive term $(1-x)^{-\theta}$ guarantees that $g_0'(x) > 0$.

Note that near the absorbing boundary, for $x \in (0, 1/2]$, the term $(1-x)^{-\theta}$ is bounded by $2^\theta$. The integral can be bounded by a constant multiple of $\int_x^{1/2} y^{-1} dy = \ln(1/2) - \ln(x)$. 

At the right boundary, we evaluate the limit of $g_0'(x)$ as $x \to 1$. Since $\theta > 0$, the denominator $(1-x)^\theta$ approaches $0$. Furthermore, because the integrand is integrable for $\theta > 0$, the numerator integral $\int_x^1 y^{-1}(1-y)^{\theta-1} dy$ also approaches $0$ as the interval of integration shrinks to a point. Because this presents a valid $0/0$ indeterminate form and both functions are differentiable on $(0,1)$, we can apply L'Hôpital's rule:
\[
\lim_{x \to 1} g_0'(x) = \lim_{x \to 1} \frac{\int_x^1 y^{-1}(1-y)^{\theta-1} dy}{(1-x)^\theta} = \lim_{x \to 1} \frac{-x^{-1}(1-x)^{\theta-1}}{-\theta(1-x)^{\theta-1}} = \frac{1}{\theta}.
\]
Since the limit is finite and $g_0'$ is continuous, $g_0'$ is bounded by its finite maximum on $[1/2, 1)$. Combining this with the logarithmic behavior near zero, there exists a global constant $c_1 > 0$ such that for all $x \in (0, 1)$:
\[
0 < g_0'(x) \le c_1(1 - \ln x).
\]
By the definition of our operator, we have the relation:
\[
g_0''(x) = \frac{\theta x g_0'(x) - 1}{x(1-x)}.
\]
As $x \to 1$, the numerator approaches $\theta(1)(1/\theta) - 1 = 0$, and the denominator approaches $0$. Applying L'Hôpital's rule to this $0/0$ form, we find $\lim_{x \to 1} g_0''(x) = -1/(\theta+1)$, which is also finite. Near zero, substituting our global logarithmic bound into the numerator yields:
\[
|g_0''(x)| \le \frac{|\theta x c_1(1 - \ln x) - 1|}{x(1-x)}.
\]
Since $x(1 - \ln x) \to 0$ as $x \to 0$, the numerator is globally bounded. The denominator introduces an $x^{-1}$ singularity at zero. Because there is no singularity at $x=1$, there exists a constant $c_2 > 0$ such that for all $x \in (0, 1)$:
\[
|g_0''(x)| \le c_2 x^{-1}.
\]
The strict concavity near the boundary follows because $\lim_{x \to 0^+} x g_0'(x) = 0$, ensuring $\theta x g_0'(x) - 1 < 0$ for sufficiently small $x$.

Finally, differentiating the relation for $g_0''(x)$ yields:
\[
g_0'''(x) = \frac{\theta g_0''(x)}{1-x} + \frac{\theta g_0'(x)}{(1-x)^2} + \frac{1-2x}{x^2(1-x)^2}.
\]
By a similar argument, applying the triangle inequality, and substituting our established global bounds for $g_0'$ and $g_0''$:
\[
|g_0'''(x)| \le \frac{\theta c_2 x^{-1}}{1-x} + \frac{\theta c_1(1-\ln x)}{(1-x)^2} + \frac{|1-2x|}{x^2(1-x)^2}.
\]
Factoring out $x^{-2}$ from the right side explicitly reveals the growth rate:
\[
|g_0'''(x)| \le x^{-2} \left( \frac{\theta c_2 x}{1-x} + \frac{\theta c_1 x^2(1-\ln x)}{(1-x)^2} + \frac{|1-2x|}{(1-x)^2} \right).
\]
For $x \in (0, 1/2]$, the denominator $(1-x)$ is strictly bounded away from zero (specifically, $1-x \ge 1/2$). Because $x$ and $x^2(1-\ln x)$ are continuous functions that approach zero as $x \to 0$, the entire expression inside the parentheses is strictly bounded above by some finite constant $K$. Combining this explicit bounding near zero with the finite limit at the right boundary, there exists a global constant $c_3 > 0$ such that for all $x \in (0, 1)$:
\[
|g_0'''(x)| \le c_3 x^{-2}.
\qedhere\]
\end{proof}

We now turn to $\tilde \tau_n = \inf\{t : \tilde Z^n(t)=0\}$.  Note that $\tilde \tau_n \Rightarrow \tau$ does not follow directly from $\tilde Z^n\Rightarrow \hat Z$ because the hitting time of zero is not continuous on the Skorokhod space. 

\begin{lemma} 
Let $g_0$ be the function defined in Proposition \ref{prop:expected_hitting_time}. There exist a positive constant $\kappa$ and an integer $n_0$ such that for all $n \ge n_0$ and all $x \in K_n$,
\[ \mathbb{E}_x[\tilde{\tau}_n] \le \kappa g_0(x). \]
\end{lemma}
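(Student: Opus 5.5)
The plan is a Lyapunov-function (Dynkin) argument with $g_0$ as the test function. I will show that there are $n_0$ and $c>0$ such that the discrete generator
\[ \tilde G_n g_0(x) = \alpha_n \mathbb{E}_x\left[g_0(\tilde Z^n(\alpha_n^{-1}))-g_0(x)\right] \le -c \qquad \text{for all } n\ge n_0,\ x\in K_n\setminus\{0\}. \]
Given this, consider the chain $\tilde Z_n^{(0,\theta)}$ stopped at $0$. The process $g_0(\tilde Z^n(k/\alpha_n))+c\,k/\alpha_n$, for $k$ up to the hitting step, is a supermartingale. Optional stopping at $\tilde\tau_n\wedge (m/\alpha_n)$, together with $g_0\ge 0$, gives $c\,\mathbb{E}_x[\tilde\tau_n\wedge m/\alpha_n]\le g_0(x)$. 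Monotone convergence as $m\to\infty$ then yields the lemma with $\kappa=1/c$. Since $g_0(0)=0$, the case $x=0$ is trivial.

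To get the generator bound I split $K_n\setminus\{0\}$ into two regions at a fixed $\delta$. The value $\delta$ is taken from Corollary \ref{cor:g0_bounds}, shrunk if needed so that $g_0''<0$ on $(0,2\delta)$.

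\emph{Interior region $x\ge\delta$.} Here I repeat the Taylor computation from the proof of the convergence theorem, using the first and second moments \eqref{eqm1} and \eqref{eqm2}. That computation gives
\[ \tilde G_n g_0(x)=\hat G g_0(x)+\frac{\theta x}{2n}g_0''(x)+\alpha_n\mathbb{E}_x R. \]
By Proposition \ref{prop:expected_hitting_time}, $\hat G g_0=-1$. On $[\delta/2,1]$ the derivatives $g_0''$ and $g_0'''$ are bounded, by \eqref{eq:g0_bound2} and \eqref{eq:g0_bound3} and the finite limits at $1$ found in the corollary's proof. So both error terms are $O(1/n)$ uniformly, and $\tilde G_n g_0(x)\le -1/2$ for large $n$. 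At $x=1$ only downward steps occur, and the same expansion applies.

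\emph{Boundary layer $x=k/n<\delta$.} This is the main obstacle. There the third-derivative remainder is of size about $c_3x^{-2}\cdot x/n=O(1/(nx))$, which is not small when $k=O(1)$. So I will not use the third-order expansion. Instead I use the exact two-sided increments with $h=1/n$:
\[ \tilde G_n g_0(x)=k(n-k)\,[g_0(x+h)-g_0(x)]+k(n+\theta-k)\,[g_0(x-h)-g_0(x)]. \]
I rewrite this as
\[ k(n-k)\,[g_0(x+h)+g_0(x-h)-2g_0(x)] \;+\; k\theta\,[g_0(x-h)-g_0(x)]. \]

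For the second-difference term I use a lower bound on the concavity. From $g_0''=(\theta x g_0'-1)/(x(1-x))$ and the fact that $x g_0'(x)\to 0$ (from \eqref{eq:g0_bound1}), shrinking $\delta$ gives $g_0''(y)\le -1/(2y)$ on $(0,2\delta)$. The second difference then equals $h^2 g_0''(\xi)$ for some $\xi\in(x-h,x+h)$, so it is at most $-h^2/(2(x+h))$. Hence the first term is at most
\[ -\frac{k(n-k)}{2n(k+1)}\le -\frac{1-\delta}{4}. \]
The concavity inequality still holds at $k=1$, where $x-h=0$, since $g_0$ is continuous at $0$.

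The drift term is at most $k\theta h\sup_{[x-h,x]}g_0'$. By \eqref{eq:g0_bound1} this is $O(x(1-\ln(x/2)))$, up to an extra $O(\theta h\int_0^h(1-\ln y)\,dy\cdot n)=O(h\ln n)$ when $k=1$. It is therefore smaller than $1/8$ once $\delta$ is small and $n$ is large. Combining the two regions gives the uniform negative drift with $c=1/8$, which completes the plan.
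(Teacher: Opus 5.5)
Your proof is correct. It shares the paper's overall architecture: $g_0$ as a Lyapunov function, a Taylor expansion with the exact first and second moments away from $0$, concavity of $g_0$ near $0$, then optional stopping and monotone convergence. You handle the boundary layer differently, and your version is the more complete one.

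The paper splits at the microscopic level $x=m/n$. For $x\ge m/n$ it bounds the remainder by $C_m/(nx)\le C_m/m$ and sends $m\to\infty$ after $n\to\infty$. For the finitely many states $k<m$ it uses Jensen's inequality, which only shows $\tilde G_n g_0(k/n)<0$ for each $n$. The paper then asserts $\limsup_{n\to\infty}\tilde G_n g_0(k/n)<0$, which is what it needs to extract a uniform constant $C$. That does not follow from strict negativity at each $n$. The claim is true, but proving it requires a quantitative lower bound on the Jensen gap.

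Your split at a fixed macroscopic $\delta$ supplies exactly that missing uniformity. You combine the exact two-point formula for $\tilde G_n g_0$ with the quantitative concavity bound $g_0''(y)\le -1/(2y)$ on $(0,2\delta)$. This gives $\tilde G_n g_0(k/n)\le -(1-\delta)/4$ uniformly over all $k<\delta n$, and it avoids the double limit.

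One small simplification: since $g_0'>0$, the term $k\theta[g_0(x-h)-g_0(x)]$ is nonpositive, so you do not need to estimate it at all.
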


\begin{proof}
As before, we analyze the operator
\[ \tilde G_n f(x) = \alpha_n \mathbb{E}_x\left[f(\tilde Z^n(\alpha_n^{-1}\wedge \tilde \tau_n) - f(x)\right].\]
Continuing with the notation $\Delta \tilde Z^n = \tilde Z^n(\alpha_n^{-1})-\tilde Z^n(0)$, we write
\[ \tilde{G}_n f(x) = \alpha_n \mathbb{E}_x[f(x + \Delta \tilde{Z^n}) - f(x)]. \]

For $0 < \xi < 1$, define $V(\xi) = [\xi, 1]$ and $V_n(\xi) = K_n \cap V(\xi)$. We first analyze the operator away from the absorbing boundary. Applying a third-order Taylor expansion to $\tilde{G}_n g_0(x)$ yields:
\[ \tilde{G}_n g_0(x) = \alpha_n \mathbb{E}_x\left[g_0'(x)\Delta \tilde Z^n + \frac{1}{2}g_0''(x)(\Delta \tilde Z^n)^2\right] + R_n(x), \]
where the remainder term is given by $R_n(x) = \frac{1}{6}\alpha_n \mathbb{E}_x[g_0'''(x^*)(\Delta \tilde Z^n)^3]$, for some (random) $x^*$ between $x$ and $x + \Delta \tilde Z^n$. Substituting our explicit moments from Equations \eqref{eqm1} and \eqref{eqm2} into the main terms, we obtain:
\begin{align*}
\tilde{G}_n g_0(x) &= -\theta x g_0'(x) + \frac{1}{2}g_0''(x)\left[2x(1-x) + \frac{\theta x}{n}\right] + R_n(x) \\
&= \left[x(1-x)g_0''(x) - \theta x g_0'(x)\right] + \frac{\theta x}{2n}g_0''(x) + R_n(x) \\
&= \hat Gg_0(x) + \frac{\theta x}{2n}g_0''(x) + R_n(x).
\end{align*}

We now examine the uniform convergence of $\tilde{G}_n g_0(x)$ to $Gg_0(x)$ by defining our domain away from the absorbing boundary as $V_n(m/n)$ for an integer $m \ge 2$. We strictly control the error terms using the global bounds established in Corollary \ref{cor:g0_bounds}. 
For the second derivative term, applying \eqref{eq:g0_bound2} yields $|\frac{\theta x}{2n}g_0''(x)| \le \frac{\theta c_2}{2n}$, which vanishes uniformly as $n \to \infty$. 

To bound the remainder $R_n(x)$, observe that since $\Delta \tilde Z^n \in \{-1/n, 0, 1/n\}$, we have $|\Delta \tilde Z^n|^3 \leq \frac{1}{n}(\Delta \tilde Z^n)^2$. The third absolute moment is therefore bounded by:
\[ \alpha_n \mathbb{E}_x[|\Delta \tilde Z^n|^3] = \frac{1}{n} \alpha_n \mathbb{E}_x[(\Delta \tilde Z^n)^2] = \frac{x(2(1-x) + \theta/n)}{n} \le \frac{Kx}{n}, \]
for some constant $K > 0$. For $x \in V_n(m/n)$, we have $x \ge m/n$. Since the jump size is $1/n$, the intermediate point $x^*$ satisfies $x^* \ge x - 1/n \ge x(1 - 1/m) = x(\frac{m-1}{m})$. Applying \eqref{eq:g0_bound3}, we get:
\[ |g_0'''(x^*)| \le c_3(x^*)^{-2} \le c_3 \left(\frac{m}{m-1}\right)^2 x^{-2}. \]
Substituting these into the remainder gives:
\[ |R_n(x)| \le \frac{1}{6} \left[ c_3 \left(\frac{m}{m-1}\right)^2 x^{-2} \right] \left( \frac{Kx}{n} \right) = \frac{C_m}{nx}, \]
where $C_m = \frac{K c_3}{6} (\frac{m}{m-1})^2$. Because $x \ge m/n$, it follows that $nx \ge m$, meaning $|R_n(x)| \le \frac{C_m}{m}$. We can now cleanly bound the total difference:
\[ \sup_{x \in V_n(m/n)} |\tilde{G}_n g_0(x) - \hat Gg_0(x)| \le \frac{\theta c_2}{2n} + \frac{C_m}{m}. \]
Recall that $Gg_0(x) = -1$. Taking the limit supremum as $n \to \infty$, the $1/n$ term vanishes, leaving an error bounded solely by $C_m/m$. Since $C_m$ converges to a constant as $m \to \infty$, taking the limit as $m \to \infty$ drives the remaining error to zero. Thus, we conclude:
\begin{equation} \label{eq:interior_limit}
\lim_{m \to \infty} \limsup_{n \to \infty} \sup_{x \in V_n(m/n)} \tilde{G}_n g_0(x) \le -1.
\end{equation}

Next, we address the region near the absorbing boundary. Let $x_n = m/n$ for small integer $m \ge 1$. From our first moment calculation, we can show that:
\[ \mathbb{E}_{x_n}[x_n + \Delta \tilde{Z}^n] = x_n - \frac{\theta x_n}{\alpha_n} < x_n. \]
From Corollary \ref{cor:g0_bounds}, we know that near zero, $g_0'(x) > 0$ and $g_0$ is strictly concave. Thus, for sufficiently large $n$, applying Jensen's inequality yields:
\[ \mathbb{E}_{x_n}[g_0(x_n + \Delta \tilde{Z}^n)] < g_0(\mathbb{E}_{x_n}[x_n + \Delta \tilde{Z}^n]) < g_0(x_n). \]
Consequently, $\tilde{G}_n g_0(x_n) = \alpha_n (\mathbb{E}_{x_n}[g_0(x_n + \Delta \tilde{Z}^n)] - g_0(x_n)) < 0$. Taking the limit superior as $n \to \infty$, we get:
\begin{equation} \label{eq:boundary_limit}
\limsup_{n \to \infty} \tilde{G}_n g_0(m/n) < 0, \quad m = 1, 2, \dots
\end{equation}

We now combine our two main bounds. The strict negativity near the boundary established in Equation \eqref{eq:boundary_limit}, together with the uniform $-1$ bound in the interior derived in Equation \eqref{eq:interior_limit}, guarantee that there exists a constant $C > 0$ and an integer $n_0$ such that for all $n \ge n_0$ and all $x \in K_n \setminus \{0\}$,
\[ \tilde{G}_n g_0(x) \le -C. \]
To conclude the proof, we first note that the following is a martingale:
\[ M_k = g_0(\tilde{Z}^n((k/\alpha_n)\wedge \tilde \tau_n)) - \sum_{j=0}^{k-1} \frac{1}{\alpha_n} \tilde{G}_n g_0(\tilde{Z}^n((j/\alpha_n)\wedge \tilde \tau_n)). \]
Indeed, since $\tilde{Z}^n((k/\alpha_n)\wedge \tilde \tau_n) = \tilde Z^{(0,\theta)}_n(k \wedge (\alpha_n \tilde \tau_n))$ a Markov chain, this is one of the standard martingales associated with Markov chains. By the Optional Sampling Theorem and our uniform bound $\tilde{G}_n g_0 \le -C$ on $K_n \setminus \{0\}$, we have:
\begin{align*}
\mathbb{E}_x[g_0(\tilde{Z}^n(t\wedge \tilde{\tau}_n))] &= g_0(x) + \mathbb{E}_x\left[\sum_{j=0}^{\alpha_n(\tilde{\tau}_n \wedge t) - 1} \frac{1}{\alpha_n} \tilde{G}_n g_0\left(\tilde{Z}^n\left(\frac{j}{\alpha_n}\right)\right)\right] \\
&\le g_0(x) - \mathbb{E}_x\left[\sum_{j=0}^{\alpha_n(\tilde{\tau}_n \wedge t) - 1} \frac{C}{\alpha_n}\right] \\
&= g_0(x) - C \cdot \mathbb{E}_x[t\wedge \tilde{\tau}_n], \quad \text{for } x \in K_n \setminus \{0\}, t=0, 1/\alpha_n, 2/\alpha_n, \dots
\end{align*}
Since $g_0$ is a non-negative function, $\mathbb{E}_x[g_0(\tilde{Z}^n(t\wedge \tilde{\tau}_n))] \ge 0$. Rearranging the inequality yields:
\[ C \cdot \mathbb{E}_x[t\wedge\tilde{\tau}_n] \le g_0(x). \]
We now let $t \to \infty$. By the Monotone Convergence Theorem, the expected value $\mathbb{E}_x[t\wedge\tilde{\tau}_n ]$ monotonically increases to $\mathbb{E}_x[\tilde{\tau}_n]$. Thus, we conclude:
\[ C \cdot \mathbb{E}_x[\tilde{\tau}_n] \le g_0(x). \]
Setting $\kappa = 1/C$ gives $\mathbb{E}_x[\tilde{\tau}_n] \le \kappa g_0(x)$.
\end{proof}

To establish the joint weak convergence in Theorem \ref{thm:hitting_time_convergence}, we first consider the hitting time of $\delta>0$, which is a.s. continuous for $\hat Z$. 

\begin{lemma} \label{lem:hitting_time_continuity}
For $\delta>0$, define the $\delta$-hitting time mapping $\tau^{\delta}: D_{[0,1]}[0,\infty) \to [0,\infty]$ by $\tau^{\delta}(x) = \inf\{t \ge 0 : x(t) \le \delta\}$. The mapping $x \mapsto \tau^{\delta}(x)$ is continuous at any path $x \in D_{[0,1]}[0,\infty)$ as long as $x$ is continuous at $\tau^{\delta}(x)$ and $\tau^{\delta}(x) = \tilde{\tau}^{\delta}(x)$, where $\tilde{\tau}^{\delta}(x) = \inf\{t \ge 0 : x(t) < \delta\}$. If $Z$ is the continuous diffusion process governed by $G$ on $[0,1]$, then the mapping $\tau^{\delta}$ is continuous almost surely with respect to the distribution of $Z$.
\end{lemma}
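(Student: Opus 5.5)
The proof has two parts: a deterministic continuity statement for the map $\tau^\delta$, and a probabilistic check that the diffusion almost surely satisfies its hypotheses.

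\textbf{Deterministic part.} Fix $x\in D_{[0,1]}[0,\infty)$ with $t_0=\tau^\delta(x)=\tilde\tau^\delta(x)$, $x$ continuous at $t_0$, and let $x_n\to x$ in the Skorokhod topology. I will show $\liminf_n \tau^\delta(x_n)\ge t_0$ and $\limsup_n\tau^\delta(x_n)\le t_0$. If $t_0=\infty$, only the lower bound is needed.

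For the lower bound, fix $T<t_0$. The closed-interval infimum $\inf_{s\le T}x(s)$ is attained or approached along the path; right-continuity together with $x(t)>\delta$ for $t<t_0$ gives $\inf_{s\le T}x(s)>\delta$. Here one must also handle left limits equal to $\delta$; I would treat that case by shrinking $T$ slightly, using that on the compact $[0,T]$ the left limits $x(s-)\ge\delta$ and a jump landing exactly at $\delta$ would force $x(s)\le\delta$.

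There are time changes $\lambda_n$ with $\sup_{s\le T}|\lambda_n(s)-s|\to0$ and $\sup_{s\le T}|x_n(\lambda_n(s))-x(s)|\to0$. Hence $x_n>\delta$ on $[0,\lambda_n(T)]$ for large $n$, so $\tau^\delta(x_n)\ge \lambda_n(T)\to T$.

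For the upper bound, use $\tilde\tau^\delta(x)=t_0$: for any $\epsilon>0$ there is $s\in[t_0,t_0+\epsilon)$ with $x(s)<\delta$. By right continuity we may take $s$ to be a continuity point of $x$. Then $x_n(s)\to x(s)<\delta$, so $\tau^\delta(x_n)\le s<t_0+\epsilon$ eventually.

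The continuity of $x$ at $t_0$ is used to avoid the jump issue. Without it, the hitting of level $\delta$ might be via a jump, and the shifted times in $x_n$ could be misaligned. In the proof it is used only to ensure $x(t_0-)=x(t_0)$, so that the lower-bound argument applies up to $t_0$.

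\textbf{Probabilistic part.} For $Z$, continuity at $\tau^\delta$ is automatic because $Z$ has continuous paths. It remains to show that $\mathbb{P}(\tau^\delta(Z)<\tilde\tau^\delta(Z))=0$. On this event the path reaches $\delta$ and then stays $\ge\delta$ for a positive time. By the strong Markov property at $\tau^\delta$, it suffices to show that the diffusion started at $\delta\in(0,1)$ immediately enters $(0,\delta)$, i.e.\ $\mathbb{P}_\delta(\tilde\tau^\delta=0)=1$.

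This is regularity of interior points for a one-dimensional diffusion. The generator $x(1-x)f''-\theta xf'$ has diffusion coefficient $x(1-x)>0$ and smooth drift near $\delta$. I would argue via the scale function $s$, which is strictly increasing with $s'>0$ near $\delta$. The process $s(Z)$ is a local martingale with nondegenerate quadratic variation near $\delta$, hence a time-changed Brownian motion started at $s(\delta)$. Brownian motion immediately visits both sides of its starting point by the law of the iterated logarithm or Blumenthal's 0-1 law. Mapping back through $s^{-1}$ gives the claim.

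\textbf{Expected obstacle.} The main obstacle is the careful bookkeeping in the lower bound for càdlàg paths, specifically ensuring that $\inf_{s\le T}x(s)>\delta$ for every $T<t_0$ and transferring this through the Skorokhod time changes. I would first try using the standard characterization of $J_1$ convergence on compacts at continuity points, restricting $T$ to continuity points of $x$.
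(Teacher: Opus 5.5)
Your route matches the paper's. You get the upper bound from a point $s\in[t_0,t_0+\epsilon)$ with $x(s)<\delta$; you use a continuity point of $x$, the paper uses $x_n(\lambda_n(s))\to x(s)$, and both are fine. You get the lower bound from a positive margin $\inf_{s\le T}x(s)>\delta$ carried through the Skorokhod time changes. For $Z$, you use the strong Markov property at $\tau^\delta$ plus regularity of the interior point $\delta$. Your scale-function and time-changed Brownian motion argument makes explicit what the paper only cites.

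There is, however, a step that fails: your treatment of left limits in the lower bound. The dangerous case is not a jump landing at $\delta$. It is a path that decreases to $\delta$ and then jumps \emph{up} at some $s^*<t_0$, so that $x(s^*-)=\delta<x(s^*)$ while $x>\delta$ on $[0,t_0)$. Then $\inf_{s\le T}x(s)=\delta$ for every $T\in[s^*,t_0)$. Shrinking $T$ below $s^*$ does not help, since you need $\liminf_n\tau^\delta(x_n)\ge T$ for all $T<t_0$.

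No repair is possible, because the first sentence of the lemma is false in this situation. Take $\delta=\tfrac14$, $x(s)=\tfrac14+\tfrac{1-s}{4}$ on $[0,1)$, and $x(s)=\bigl(\tfrac12-\tfrac{s-1}{2}\bigr)\vee 0$ on $[1,\infty)$. Then $\tau^\delta(x)=\tilde\tau^\delta(x)=\tfrac32$ and $x$ is continuous at $\tfrac32$. Yet $x_n=(x-\tfrac1n)\vee 0\to x$ uniformly while $\tau^\delta(x_n)=1-\tfrac4n\to 1$. Continuity at $t_0$ plays no role here; in fact your argument never uses it, contrary to your remark.

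What makes the argument work is the extra condition $x(s-)>\delta$ for all $s<t_0$, equivalently $\inf_{s\le T}x(s)>\delta$ for every $T<t_0$. This is automatic for continuous $x$, because then the infimum over $[0,T]$ is attained. Since $Z$ has continuous paths, you still get the almost-sure continuity of $\tau^\delta$ under the law of $Z$, once the deterministic part is stated for continuous paths. That is all the proof of Theorem \ref{thm:hitting_time_convergence} uses. The paper's own proof has the same hole: it claims a c\`adl\`ag path attains its infimum on $[0,t-\epsilon]$, which is exactly what fails in the example above.
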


This result is quite classical, but proofs can be hard to locate, so we include one following the approach of \cite{rogers2022ray}.

\begin{proof}
Let $x \in D_{[0,1]}[0,\infty)$ be a path satisfying the two conditions from the hypothesis, and let $t = \tau^{\delta}(x)$. Assume $t < \infty$. Let $x_n \to x$ in the Skorohod topology; we must show $\tau^{\delta}(x_n) \to t$.

Let $\epsilon > 0$. Because $\tau^{\delta}(x) = \tilde{\tau}^{\delta}(x)$, the path strictly crosses the threshold. Then there exists some time $s \in [t, t + \epsilon)$ such that $x(s) < \delta$. By the definition of Skorohod convergence, there exist strictly increasing, continuous time changes $\lambda_n$ mapping $[0,\infty)$ onto itself such that $\lambda_n(u) \to u$ uniformly on compact sets, and $x_n(\lambda_n(u)) \to x(u)$ uniformly on compact sets. Evaluating at $s$, we have $x_n(\lambda_n(s)) \to x(s) < \delta$. For all sufficiently large $n$, $x_n(\lambda_n(s)) < \delta$, which implies $\tau^{\delta}(x_n) \le \lambda_n(s)$. Because $\lambda_n(s) \to s < t + \epsilon$, taking the limit superior yields $\limsup_{n \to \infty} \tau^{\delta}(x_n) \le t + \epsilon$.

To establish the lower bound, observe that by the definition of $\tau^{\delta}(x)$, $x(u) > \delta$ for all $u \in [0, t - \epsilon]$. Because $x$ has càdlàg paths, its infimum over the compact interval $[0, t - \epsilon]$ is achieved and is strictly greater than $\delta$. Let this infimum be $\delta' > \delta$. By the uniform convergence of $x_n \circ \lambda_n$ to $x$, for all sufficiently large $n$, we have $\inf_{u \in [0, t - \epsilon]} x_n(\lambda_n(u)) > \delta$. Thus, the process $x_n$ cannot drop below $\delta$ before time $\lambda_n(t - \epsilon)$. Since $\lambda_n(t-\epsilon) \to t-\epsilon$, it follows that $\liminf_{n \to \infty} \tau^{\delta}(x_n) \ge t - \epsilon$. Since $\epsilon$ is arbitrary, $\tau^{\delta}(x_n) \to t$, establishing the continuity of the mapping.

Now, for $Z$ our continuous diffusion limit we note that $Z$ has continuous sample paths almost surely. Moreover, $\delta \in (0,1)$ is a regular interior point for the diffusion $Z$ because the diffusion coefficient $a(\delta) = \delta(1-\delta) > 0$ is strictly non-degenerate, and the drift $b(\delta) = -\theta \delta$ is bounded. By the strong Markov property and the properties of regular one-dimensional diffusions, the process does not linger or reflect at an interior point without crossing. Then upon hitting $\delta$, the process immediately enters the open interval $(0, \delta)$ with probability 1. Therefore, the hitting time $\tau^{\delta}(Z)$ and the strict crossing time $\tilde{\tau}^{\delta}(Z)$ coincide almost surely. Hence, the set of sample paths for which the hitting time mapping is discontinuous has measure zero.
\end{proof}

\begin{proof}[Proof of Theorem \ref{thm:hitting_time_convergence}]
Consider $h(x) = (x, \tau(x))$ and $h_\delta(x) = (x, \tau^\delta(x))$ for $\delta > 0$. 
By Lemma \ref{lem:hitting_time_continuity}, the mapping $\tau^\delta$ is continuous almost surely with respect to the distribution of $Z$. Furthermore, as $\delta \to 0$, $\tau^\delta(x) \to \tau^0(x)$ almost surely, satisfying condition. Let $\tilde \tau^\delta_n = \tau^\delta(\tilde Z^n)$ and note that $\tilde \tau_n = \tau(\tilde Z^n)$.  We next verify that
\[ \lim_{\delta \to 0} \limsup_{n \to \infty} \P(\rho(\tilde{\tau}_n, \tilde\tau_n^\delta) > \varepsilon) = 0, \]
where $\rho(s, t) = |\arctan(s) - \arctan(t)|$ denotes the metric on $[0, \infty]$. By using the fact that the derivative of the arctangent function is strictly bounded by $1$, we have $|\arctan(s) - \arctan(t)| \le |s - t|$. This implies:
\[ \P(\rho(\tilde{\tau}_n, \tilde\tau_n^\delta) > \varepsilon) \le \P(|\tilde{\tau}_n - \tilde\tau_n^\delta| > \varepsilon). \]
Because $\tau_n^\delta$ is a well-defined stopping time, we apply the Strong Markov Property at time $\tau_n^\delta$ to restart the process. The shifted process $\tilde{Z}^n(t)$ starts at $x > \delta$ and has continuous trajectories in the Skorohod topology (with maximum jump size $1/n \to 0$), so it must cross the level $\delta$ before it can hit the origin. Therefore, $\tilde{\tau}_n \ge \tilde \tau_n^\delta$ almost surely, which resolves the absolute value. Applying this uniform bound, we have:
\[ \P(\rho(\tilde{\tau}_n, \tilde\tau_n^\delta) > \varepsilon) \le \frac{\kappa}{\varepsilon} \mathbb{E}\left[ g_0(\tilde{Z}^n(\tilde\tau_n^\delta)) \right]. \]
At the stopping time $\tilde\tau_n^\delta$, we have $\tilde{Z}^n(\tilde\tau_n^\delta) \le \delta$. From Proposition \ref{prop:expected_hitting_time}, $g_0(x)$ is continuous with $g_0(0) = 0$. Taking the limit as $\delta \to 0$, $g_0(\tilde{Z}^n(\tilde\tau_n^\delta)) \to 0$ uniformly bounding the probability to $0$. Thus, by \cite[Problem 3.5]{ethier2009markov}, $(\tilde{Z}^n, \tilde{\tau}_n) \Rightarrow (Z, \tau^0)$.
\end{proof}

\subsection{Sample Path Concatenation and Continuity}

We must first establish the conditions under which the concatenation of such sample paths preserves convergence in the Skorohod topology.

\begin{thm} \label{thm:concatenation}
    Let $f, f^1, f^2, \dots$ be continuous functions in the Skorohod space $D_E[0,\infty)$ taking values in a metric space $(E, d)$. Let $\{\tau^i\}_{i\geq 0}$ be a sequence of strictly positive times such that $\sum_{i=0}^\infty \tau^i = \infty$.  Define the following concatenation with $t \in [0,\infty)$:
    
\[
(f\star f^1\star f^2\star \dots)(t) =
\begin{cases}
f(t) & \text{if } \quad 0\leq t<\tau^0 \\
f^1(t-\tau^0) & \text{if } \quad \tau^0\leq t<\tau^0 + \tau^1 \\
\vdots \\
f^n\left(t-\sum \limits_{i=0}^{n-1}\tau ^i\right)  & \text{if } \quad\sum \limits_{i=0}^{n-1}\tau ^i \leq t < \sum \limits_{i=0}^{n}\tau ^i\\
\vdots
\end{cases}
\]

\noindent Suppose that there exists a collection of sequences of functions $(f_n), (f_n^1), (f_n^2), \dots$ with corresponding sequences of strictly positive times $(\sigma^0_n), (\sigma^1_n), \dots$ representing the concatenation times for the pre-limit paths. Suppose that as $n \to \infty$, we have convergence in the product space $D_E[0,\infty) \times (0,\infty)$ given by:
\[ (f_n, \sigma_n^0) \longrightarrow (f, \tau^0) \quad \text{and} \quad (f_n^i, \sigma_n^i) \longrightarrow (f^i, \tau^i) \quad \text{for all } i \ge 1. \]

If the concatenated pre-limit path is defined as:
\[
(f_n\star f^1_n\star f^2_n\star \dots)(t) =
\begin{cases}
f_n(t) & \text{if } \quad 0\leq t<\sigma_n^0 \\
f_n^1(t-\sigma_n^0) & \text{if } \quad \sigma_n^0\leq t<\sigma_n^0 + \sigma_n^1 \\
\vdots \\
f_n^k\left(t-\sum \limits_{i=0}^{k-1}\sigma_n^i\right)  & \text{if } \quad\sum \limits_{i=0}^{k-1}\sigma_n^i \leq t < \sum \limits_{i=0}^{k}\sigma_n^i\\
\vdots
\end{cases}
\]
and if $f, f^1, f^2, \dots$ are continuous at the endpoints of their respective concatenated intervals, then there is convergence $(f_n\star f^1_n\star f^2_n\star \dots)\longrightarrow (f\star f^1\star f^2\star \dots)$ in the Skorokhod topology.
\end{thm}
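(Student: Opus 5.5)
We construct explicit time changes piece by piece and glue them. Write $T^k=\sum_{i=0}^{k-1}\tau^i$ and $S^k_n=\sum_{i=0}^{k-1}\sigma^i_n$, with $T^0=S^0_n=0$. Since each $\tau^i>0$ and $\sum_i\tau^i=\infty$, every compact interval $[0,T]$ is covered by finitely many concatenation intervals. It therefore suffices to prove uniform convergence after time change on $[0,T^K]$ for each fixed $K$, since Skorokhod convergence on $D_E[0,\infty)$ reduces to convergence on $[0,T]$ for a sequence $T\to\infty$ of continuity points. Because each $\sigma^i_n\to\tau^i$, we have $S^k_n\to T^k$ for every $k\le K$.

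The first step is a reduction using continuity of the limits. Since $f$ and the $f^i$ are continuous, Skorokhod convergence $f^i_n\to f^i$ upgrades to local uniform convergence: $\sup_{s\le T}d(f^i_n(s),f^i(s))\to0$ for every $T$, by the standard fact that $d(x_n\circ\lambda_n,x)\to0$ uniformly together with $\lambda_n\to\mathrm{id}$ and the uniform continuity of $x$ on compacts. This lets us work with a single global time change instead of piece-specific ones.

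Next define $\lambda_n$ as the piecewise linear, strictly increasing bijection of $[0,\infty)$ sending $T^k$ to $S^k_n$ for $k\le K$, linear on each $[T^k,T^{k+1}]$, and extended with slope one beyond $T^K$. Because $S^k_n\to T^k$ and $\tau^k>0$, we get $\sup_t|\lambda_n(t)-t|\to0$, and the slopes $\sigma^k_n/\tau^k\to1$. For $t\in[T^k,T^{k+1})$ we then have $\lambda_n(t)\in[S^k_n,S^{k+1}_n)$, so
\[(f_n\star f^1_n\star\cdots)(\lambda_n(t))=f^k_n\bigl(\lambda_n(t)-S^k_n\bigr),\]
where we use the convention $f^0=f$ and $f^0_n=f_n$. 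The argument $u_n(t)=\lambda_n(t)-S^k_n=(t-T^k)\,\sigma^k_n/\tau^k$ satisfies $|u_n(t)-(t-T^k)|\le \tau^k\,|\sigma^k_n/\tau^k-1|\to0$ uniformly in $t$.

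We then bound the distance on each piece by the triangle inequality:
\[d\bigl(f^k_n(u_n(t)),f^k(t-T^k)\bigr)\le \sup_{s\le \tau^k+1}d(f^k_n(s),f^k(s))+d\bigl(f^k(u_n(t)),f^k(t-T^k)\bigr).\]
The first term vanishes by the local uniform convergence of the first step. The second vanishes by uniform continuity of $f^k$ on $[0,\tau^k+1]$. Taking the maximum over the finitely many $k<K$ gives uniform convergence on $[0,T^K)$. At the right endpoint $T^{K}$, the assumed continuity at the concatenation endpoints lets us include the closed interval.

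The main obstacle, I expect, is the boundary behaviour at the junction times. The half-open intervals must align exactly under $\lambda_n$, which the choice $\lambda_n(T^k)=S^k_n$ guarantees. We also need that the limit concatenation is evaluated from the correct piece, and that the pieces being merely right-continuous at junctions causes no trouble; this is exactly where the endpoint continuity hypothesis enters. If the $f^i$ were only càdlàg, we would instead compose the piece-specific time changes $\lambda^k_n$ with the affine rescaling. The gluing would then be handled the same way, but we would need the endpoints of the $f^k$ to be continuity points so that the rescaled time changes can be forced to fix them.
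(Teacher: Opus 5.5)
Your proposal is correct and follows essentially the same route as the paper: a piecewise linear time change sending each limit junction $T^k$ to the pre-limit junction $S^k_n$, combined with the fact that Skorokhod convergence to a continuous limit is locally uniform. Your version is somewhat more careful than the paper's. You explicitly control the shifted arguments $u_n(t)=\lambda_n(t)-S^k_n$ via uniform continuity of $f^k$, and you use a global time change (slope one beyond $T^K$) rather than time changes in $\Lambda_t$ fixing a continuity point $t$.
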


\begin{proof}
    It is sufficient to prove that for an arbitrary continuity point $t$ of\\
    $(f\star f^1\star f^2\star \dots)$ there exist some elements $\lambda_n \in \Lambda_t,$ the space of continuous and increasing mappings in $[0,t]$, such that 
    \[
\sup_{s \leq t} |\lambda_ns -s|\longrightarrow 0 \ \text{and},
\]
\[
\sup_{s \leq t} d((f_n\star f^1_n\star f^2_n\star \dots)(\lambda_ns), (f\star f^1\star f^2\star \dots)(s))\longrightarrow 0.
\] 

It is important to note that the only points that are not continuity points of $(f\star f^1\star f^2\star \dots)$ are the times at the jumps of our piecewise function. Hence if $t$ is a continuity point, then $t \notin \{\tau^0, \tau^0 + \tau^1, \dots, \sum_{i=0}^{k}\tau ^i, \dots\}.$ Suppose that $t> \tau^0$ and let $k\geq 0$ be such that $t<\sum \limits_{i=0}^{k}\tau ^i$. Without loss of generality assume that $k$ is the first instance when this happens. Let $l_n: [0,t] \longrightarrow [0,\infty)$ be a mapping such that 
\[
l_n(s) =
\begin{cases}
\frac{\sigma_n^0}{\tau^0}(s) & \text{if } \quad 0\leq s \leq \tau^0 \\
\frac{\sigma_n^1}{\tau^1}(s-\tau^0)+\sigma_n^0 & \text{if } \quad \tau^0\leq s\leq \tau^0 + \tau^1 \\
\frac{\sigma_n^2}{\tau^2}\left(s-\sum \limits_{i=0}^{1}\tau ^i\right)+\sum \limits_{i=0}^{1}\sigma_n^i & \text{if } \quad \sum \limits_{i=0}^{1}\tau ^i \leq s \leq \sum \limits_{i=0}^{2}\tau ^i\\
\vdots \\
\frac{\sum \limits_{i=0}^{k-1}\sigma_n^i-t}{\sum \limits_{i=0}^{k-1}\tau^i-t}(s-t) +t  & \text{if } \quad\sum \limits_{i=0}^{k-1}\tau ^i \leq s \leq t.\\
\end{cases}
\]

Clearly, $l_n$ is a continuous and increasing piecewise function. However, we are not guaranteed that $l_n \in \Lambda_t.$ Since, for every $j=0,1,2,\dots$ $\sigma_n^j \longrightarrow \tau^j.$ We know that given a small $\epsilon>0$ there exists $N_j$ such that for every $n \geq N_j,$\\
$$\sum_{i=0}^{j}\sigma_n^i \in \left( \sum_{i=0}^{j}\tau^i - \epsilon, \sum_{i=0}^{j}\tau^i + \epsilon \right) \subset [0,t].$$ \\
For every $n\geq N,$ $l_n \in \Lambda_t$ with
\[
N= \max_{0\leq j \leq k-1} N_j.
\]
For each $s\leq t,$ because $l_n(s) - s$ is a piecewise linear function, its maximum absolute value is achieved at the boundary points of its intervals:
\[
|s-l_n(s)| \leq \max_{0\leq j \leq k-1} \left| \sum_{i=0}^{j}\tau^i - \sum_{i=0}^{j}\sigma_n^i \right|.
\]
By a similar argument, given $m>0$ fixed and arbitrary, there exists $N^*$ such that for every $n \geq N^*$ and $s\leq t$,

\[
|s-l_n(s)| \leq \max_{0\leq j \leq k-1} \left| \sum_{i=0}^{j}\tau^i - \sum_{i=0}^{j}\sigma_n^i \right| < \frac{1}{m}.
\]
It follows that
\[
\sup_{s \leq t} |s- l_n(s)| \leq \frac{1}{m}.
\]

Denote $N_m = \max\{N, N^*\}.$ There exists a minimal $k$ such that $k > N_m$. Letting $m=m_1$ and $k=k_1$, then we define the element $\lambda _1 = l_{k_1} \in \Lambda_t.$ Next taking $m_2>m_1$, there exists a minimal $k_2 > k_1$ such that $\lambda_2 = l_{k_2} \in \Lambda_t$ and 
\[
\sup_{s \leq t} |s- l_{k_2}(s)| \leq \frac{1}{m_2}.
\]
For the elements $l_{k_n}=\lambda_n \in \Lambda_t$ we have that since $m_n \longrightarrow 0$ then 
\[
\sup_{s \leq t} |s- \lambda_n(s)| \longrightarrow 0.
\]
Note that if $t<\tau^0$ then we can consider the elements $\lambda_n = l_n|_{[0,t]} \in \Lambda_t$ such that $n\geq N^*,$ which also implies

\[
\sup_{s \leq t} |s- \lambda_n(s)| \longrightarrow 0.
\]
The only thing that remains to be shown is that 
\[
\sup_{s \leq t} d((f_n\star f^1_n\star f^2_n\star \dots)(\lambda_ns), (f\star f^1\star f^2\star \dots)(s))\longrightarrow 0.
\] 
However, by the product space convergence, $f_n\longrightarrow f, f_n^k\longrightarrow f^k, \dots$ and $\sigma^0_n\longrightarrow \tau^0, \sigma_n^k\longrightarrow \tau^k,\dots$ in the Skorohod topology. Because the limit functions $f, f^1, \dots$ are continuous by hypothesis, convergence in the Skorohod topology implies uniform convergence on compact sets. Therefore, for every $s \leq t$:
\[
d((f_n\star f^1_n\star f^2_n\star \dots)(\lambda_ns), (f\star f^1\star f^2\star \dots)(s)) =
\begin{cases}
d(f_n(\lambda_ns), f(s)) & \text{if } \quad 0\leq s\leq\tau^0 \\
d(f_n^1(\lambda_ns), f^1(s)) & \text{if } \quad \tau^0\leq s \leq \tau^0 + \tau^1 \\
\vdots \\
d(f_n^k(\lambda_ns), f^k(s))  & \text{if } \quad\sum \limits_{i=0}^{k-1}\tau ^i \leq s \leq t\\
\end{cases}
\]
and 
\[
\sup_{s \leq t} |s- \lambda_n(s)| \longrightarrow 0.
\]
So, 
\[
\sup_{s \leq t} d((f_n\star f^1_n\star f^2_n\star \dots)(\lambda_ns), (f\star f^1\star f^2\star \dots)(s))\longrightarrow 0
\]
We therefore have the convergence $(f_n\star f^1_n\star f^2_n\star \dots)\longrightarrow (f\star f^1\star f^2\star \dots)$ in the Skorohod topology.  
\end{proof}

\begin{proof}[Proof of Theorem \ref{thmscaling}]
Let 
\[Z^n = \left( \frac{1}{n} Z_n^{(0,\theta)}(\lfloor \alpha_n t \rfloor), t \ge 0\right)\]
and fix $x\in (0,1)$.  By Theorem \ref{thm:censored}, we may assume that $Z_n^{(0,\theta)}= (\tilde Z_n^{(0,\theta)}(\beta^n_k))_{k\geq 0}$.  Suppose that $Z^n(0)=\tilde Z^n(0)\Rightarrow x$.  Using the excursion decomposition of $\tilde Z_n^{(0,\theta)}$ around $0$ \cite{kallenberg}, we can decompose $\tilde Z^n$ in a vector of excursions $(\tilde Y^n_0,\tilde Y^n_1,\dots)$ such that
\[ \tilde Z^n = \tilde Y^n_0 \star \tilde Y^n_1\star \cdots\]
and $\tilde Y^n_0,\tilde Y^n_1,\dots$ are independent with $\tilde Y^n_1,\tilde Y^n_2,\dots$ being iid with $n \tilde Y^n_1(0) \sim\upsilon_n^{(0,\theta)}$.  Let $\tau^n_i = \inf\{t: \tilde Y^n_i(t)=0\}$.  By Proposition \ref{prop:jump_in_dist}, for $i\geq 1$, $\tilde Y^n_i(0)\Rightarrow V$ where $V$ has density $p(x)$.  Thus Theorem \ref{thm:hitting_time_convergence} implies that for all $i$, $(\tilde Y^n_i,\tau^n_i) \Rightarrow (\hat Z, \tau)$, where for $i\geq 1$ $\hat Z(0)\sim V$ and for $i=0$, $\hat Z(0)=x$.  Since the $((\tilde Y^n_i,\tau^n_i))_{i\geq 0}$ are independent, we have that 
\[((\tilde Y^n_0,\tau^n_0),(\tilde Y^n_1,\tau^n_1),\dots) \Rightarrow ((\hat Z_0, \tau_0),(\hat Z_1, \tau_1),\dots)\]
where the $((\hat Z_i,\tau_i))_{i\geq 0}$ are independent.  By the Skorokhod representation theorem, we may assume that this convergence is happening almost surely.  Since the $\tau_i$ are independent, we clearly have that $\sum_i\tau_i=\infty$ almost surely.  Applying Theorem \ref{thm:concatenation}, we see that 
\[ \tilde Z^n \to_{as} \hat Z_0\star \hat Z_1\star\cdots := Z.\]
Note that since there exists $\epsilon>0$ such that $\P(\tau_i>\epsilon)>0$ for all $i$, for any fixed time $T$ there is a random finite constant $K$ such that for $n$ sufficiently large, there are at most $K$ excursions of $\tilde Z^n$ end before time $T$, which means that 
\[ \max_{i\leq \alpha_n T} |\beta^n_i -i| < K  \quad \textrm{ since } \quad \beta^n_i = i - |\{k \leq i: \tilde Z^{(0,\theta)}(k) =0\}|.\]
Therefore  $\max_{i\leq \alpha_n T} |\beta^n_i/n -i/n|\to 0$ and it follows from Theorem \ref{thm:censored} that $Z^n\to_{as} Z$.  The identification of generator (and its domain) of $Z$ follows from the construction on \cite[p. 65]{Mandl1968}.
\end{proof}

\bibliographystyle{plain}
\bibliography{bib}
\end{document}